\documentclass[11pt]{article}

\usepackage{amsmath, amsfonts, amsthm, amsopn}
\usepackage{hyperref, cleveref}
\usepackage{caption, subcaption, booktabs, float, multirow}
\usepackage{graphicx, epstopdf}
\usepackage{fullpage}
\usepackage{algorithm, algorithmic}
\usepackage{enumerate}
\usepackage{bm, mathrsfs}
\usepackage{MnSymbol}
\usepackage{lipsum}

\ifpdf
\DeclareGraphicsExtensions{.eps,.pdf,.png,.jpg}
\else
\DeclareGraphicsExtensions{.eps}
\fi


\newcommand{\dif}{\mathrm{d}}

\newcommand{\mr}{\mathbb{R}}
\newcommand{\mc}{\mathbb{C}}

\newcommand{\diag}{diag}
\newcommand{\vect}{vec}

\newcommand{\bff}{\mathbf{f}}
\newcommand{\bfF}{\mathbf{F}}
\newcommand{\vphi}{\varphi}
\newcommand{\tensor}[1]{\mathbf{#1}}
\newcommand{\vphin}{\varphi^N}
\newcommand{\bvphin}{\bm{{\varphi}}^N}

\newtheorem{theorem}{Theorem}[section]

\newtheorem{definition}[theorem]{Definition}

\newtheorem{lemma}[theorem]{Lemma}
\newtheorem{remark}[theorem]{Remark}

\crefname{hypothesis}{Hypothesis}{Hypotheses}

\title{The Adaptive Spectral Koopman Method for Dynamical Systems}

\author{
Bian Li\thanks{Department of Industrial and Systems Engineering, Lehigh University, Bethlehem, PA (\href{mailto:bil215@lehigh.edu}{bil215@lehigh.edu}, \href{mailto:xiy518@lehigh.edu}{xiy518@lehigh.edu})}
\and 
Yi-An Ma\thanks{Halicio\u{g}lu Data Science Institute \& Department of Computer Science and Engineering, University of California San Diego, San Diego, CA  
	(\href{mailto:yianma@ucsd.edu}{yianma@ucsd.edu})}
\and 
J. Nathan Kutz\thanks{Department of Applied Mathematics, University of Washington, Seattle, WA
	(\href{mailto:kutz@uw.edu}{kutz@uw.edu})}
\and 
Xiu Yang\footnotemark[1]}

\begin{document}

\maketitle

\begin{abstract}
  Dynamical systems have a wide range of applications in mechanics, electrical engineering, chemistry, and so on. 
  In this work, we propose the adaptive spectral Koopman (ASK) method to solve nonlinear autonomous dynamical systems. 
  This novel numerical method leverages the spectral-collocation (i.e., pseudo-spectral) method and properties of the Koopman operator to obtain the solution of a dynamical system.
  Specifically, this solution is represented as a linear combination of the multiplication of Koopman operator's eigenfunctions and eigenvalues, and these eigenpairs are approximated by the spectral method.
  Unlike conventional time evolution algorithms such as Euler’s scheme and the Runge-Kutta scheme, ASK is mesh-free, and hence is more flexible when evaluating the solution.
  Numerical experiments demonstrate high accuracy of ASK for solving one-, two- and three-dimensional dynamical systems.
\end{abstract}

\textbf{Keywords: } \textit{dynamical systems, Koopman operator, spectral-collocation method}

\section{Introduction}

The Koopman operator, introduced in 1931 by B. O.
Koopman~\cite{koopman1931hamiltonian}, is an infinite-dimensional
\emph{linear} operator that describes 
the evolution of a set of observables rather than the system state itself. The Koopman operator approach to \emph{nonlinear} dynamical systems has attracted considerable attention recently, 
as it provides a rigorous method for globally linearizing the system dynamics.
Specifically, because it is a linear operator, one can define its eigenvalues, eigenfunctions, and modes, and use them to represent dynamically interpretable low-dimensional embeddings of high-dimensional state spaces, which helps to understand the behavior of the underlying system and construct solutions through linear superposition~\cite{brunton2021modern}. 
In this procedure, the system dynamics is typically decomposed into linearly independent Koopman modes even if the system is nonlinear.
In particular, as pointed out in~\cite{mezic2020spectrum,korda2020data,nakao2020spectral}, if the dynamics is ergodic but non-chaotic, the spectrum of the Koopman operator in properly defined spaces does not contain continuous spectra, and the observable of the system can be represented as a linear combination of eigenfunctions associated with discrete eigenvalues of the Koopman operator. 
   
The Koopman operator provides powerful analytic tools to understand behaviors of dynamical systems.  For example, dynamical evolution of a finite-dimensional system described by ordinary differential equations (ODEs) can be studied by conducting Koopman mode analysis. 
Such analysis starts with a choice of a set of linearly independent observables, and the Koopman operator is then analyzed through its action on the subspace spanned by the chosen observables~\cite{mezic2005spectral}.
Moreover, it is also shown that the Koopman operator approach can be formally generalized to infinite-dimensional dynamical systems described by partial differential equations (PDEs), providing new perspectives on the analysis and control of these nonlinear spatiotemporal dynamics~\cite{wilson2016isostable,nathan2018applied,page2018koopman,nakao2020spectral}.
In addition, ergodic quotients and eigenquotients allow the Koopman operator to be used for the extraction and analysis of \emph{invariant} and \emph{periodic} structures in the state space~\cite{budivsic2009approximate}.
Moreover, Mezi\'{c} provided a Hilbert space setting
for spectral analysis of \emph{disspative dynamical systems}, and proved that the spectrum of the Koopman operator on these spaces is the closure of the product of the ``on-attractor'' and ``off-attractor'' spectra~\cite{mezic2020spectrum}. 

On the computational side, most existing numerical schemes motivated by the Koopman operator are categorized as data-driven methods, as they use spatiotemporal data to approximate a few of the leading Koopman eigenvalues, eigenfunctions, and modes.
In particular, the emerging computational method {\em dynamics mode decomposition}
(DMD)~\cite{rowley2009spectral, schmid2010dynamic,
  tu2014dynamic,proctor2016dynamic, kutz2016multiresolution, nathan2018applied,askham2018variable} as well as its variant such as extended DMD (EDMD)~\cite{williams2015data} uses snapshots of a dynamical system to extract temporal features as well as correlated spatial activity via matrix decomposition techniques. 
DMD and EDMD produce results for any appropriately formatted set of data, but connecting these outputs to the Koopman operator requires additional knowledge about the nature of the underlying system in that the system should be autonomous.
Later, a modified EDMD~\cite{williams2016extending} was proposed to compensate for the effects of system actuation when it is used to explore state space during the data collection, reestablishing the connection between EDMD and the Koopman operator in this more general class of data sets.  A review of many of the DMD variants for approximating the Koopman operator can be found in Brunton et al~\cite{brunton2021modern}. Moreover, theoretical results of identifying Koopman eigenfrequencies and eigenfunctions from a discretely sampled time series generated by such a system with unknown dynamics is provided in~\cite{das2020koopman} for a Fourier function.

Our aim in this paper is to provide a numerical method based on the
spectral-collocation method (i.e., the pseudospectral method) to implement the Koopman-operator approach to solving nonlinear ordinary differential equations (ODEs). 
Unlike the data-driven methods, this approach is on the other end of the ``spectrum'' of numerical methods, as it is based on the classical spectral method~\cite{fornberg1998practical,trefethen2000spectral}.
The main idea is to approximate eigenvalues, eigenfunctions, and modes of the Koopman operator based on its discretized form. 
Specifically, this method uses the differentiation matrix in the spectral method to approximate the generator of the Koopman operator, and then conducts eigendecomposition numerically to obtain eigenvalues and eigenvectors that approximate Koopman operator's eigenvalues and eigenfunctions, respectively.
Here, each element of an eigenvector is the approximation of the associated eigenfunction evaluated at a collocation point.
The modes are approximated by the computed eigenvalues, eigenvectors, and the initial state (or observable).
This work focuses on autonomous systems, and it would serve as a starting point for a new framework of numerical methods for dynamical systems.

The paper is organized as follows. Background topics are introduced in \Cref{sec:background}. 
Then, the adaptive spectral Koopman method is discussed in detail in \Cref{sec:ASK}. We present numerical results in \Cref{sec:experiments}, and the discussion and conclusions follow in \Cref{sec:dicussion_conclusion}.


\section{Background}
\label{sec:background}

\subsection{Koopman operator} 
\label{subsec:koopman}
Borrowing notions from~\cite{kutz2016dynamic}, we consider an autonomous system described by the ordinary differential equations
\begin{equation}
  \label{eq:auto}
  \frac{\dif \bm x}{\dif t} = \bff(\bm x),
\end{equation}
where the state $\bm x=(x_1,x_2,\dotsc,x_d)^\top$ belongs to a $d$-dimensional smooth manifold $\mathscr{M}$, and the dynamics $\bff: \mathscr{M} \rightarrow \mathscr{M}$ does not explicitly depend on time $t$. 
Here, $\bff$ is a possibly nonlinear vector-valued smooth function, of the same dimension as $\bm x$.
In many studies, we are concerned with the behavior of observables on the state space.
To this end, we define an observable to be a scalar function
$g:\mathscr{M}\rightarrow\mathbb{R}$, where $g$ is an element
of some function space $\mathcal{G}$
(e.g., $\mathcal{G}=L^2(\mathscr{M})$ as in~\cite{mezic2005spectral}). 
The flow map $\bfF_t: \mathscr{M} \rightarrow \mathscr{M}$ induced by the dynamical system~\eqref{eq:auto} depicts the evolution of the system as
\begin{equation}
\label{eqn:flow}
    \bm x(t_0 + t) = \bfF_t(\bm x(t_0)) = \bm x(t_0) + \int_{t_0}^{t_0 + t} \bff(\bm x(s)) \, \dif s. 
\end{equation}
Now we define the Koopman operator for continuous-time dynamical systems as follows~\cite{mezic2020spectrum}:
\begin{definition}
  Consider a family of operators $\{\mathcal{K}_t\}_{t\geq 0}$ acting on the space of observables so that
  \[\mathcal{K}_t g(\bm x_0) = g(\mathbf{F}_t(\bm x_0)),\] where $\bm x_0 = \bm x(t_0)$. 
  We call the family of operators $\mathcal{K}_t$ indexed by time t the Koopman
  operators of the continuous-time system~\eqref{eq:auto}.
\end{definition}
By definition, $\mathcal{K}_t$ is a \emph{linear} operator acting on the
function space $\mathcal{G}$ for each fixed $t$. Moreover, $\{\mathcal{K}_t\}$
form a semi-group.



\subsection{Infinitesimal generator} 
\label{subsec:generator}
The Koopman spectral theory~\cite{mezic2005spectral, rowley2009spectral} 
reveals properties that enable the Koopman operator to convert nonlinear finite-dimensional dynamics into linear infinite-dimensional dynamics. 
A key component in such spectral analysis is the infinitesimal generator (or generator for brevity) of the Koopman operator.
Specifically, 
the generator of the Koopman operator $\mathcal{K}_t$, denoted as $\mathcal{K}$,
is given by
\begin{align}
  \mathcal{K} g = \lim_{t \rightarrow 0} \frac{\mathcal{K}_t g
  -g}{t}. \label{eqn:Koopman_def_generator}
\end{align}
For any smooth function $g$, \Cref{eqn:Koopman_def_generator} implies that
\begin{align}
  \label{eqn:Koopman_derivative_relation}
  \mathcal{K}g(\bm x) = \frac{\dif g(\bm x)}{\dif t} = \nabla g(\bm x) \cdot \frac{\dif \bm x}{\dif t}. 
\end{align}
Denoting $\vphi$ an eigenfunction of $\mathcal{K}$ and $\lambda$ the eigenvalue associated with $\vphi$, we have
\begin{align}
  \mathcal{K} \vphi(\bm x) = \lambda \vphi(\bm x). 
    \label{eqn:Koopman_eigen}
\end{align}
Thus,
\begin{align}
    \lambda \vphi(\bm x) = \mathcal{K} \vphi(\bm x) = \frac{\dif \vphi(\bm x)}{\dif t}. 
    \label{eqn:Koopman_linear_relation}
\end{align}
This implies that $\varphi(\bm x(t_0+t))=e^{\lambda t}\varphi(\bm x(t_0))$, i.e., 
\begin{equation}
  \mathcal{K}_t \varphi(\bm x(t_0)) = e^{\lambda t} \varphi(\bm x(t_0)).
\end{equation}
Therefore, $\varphi$ is an eigenfunction of $\mathcal{K}_t$ associated with
eigenvalue $\lambda$. Of note, following the conventional notation, the eigenpair for 
$\mathcal{K}_t$ is considered as $(\varphi, \lambda)$ instead of $(\varphi,
e^{\lambda t})$.

Now suppose $g$ exists in the function space spanned by all the eigenfunctions
$\vphi_j$ (associated with eigenvalues $\lambda_j$) of $\mathcal{K}$, i.e.,
$g(\bm x) = \sum_j c_j \vphi_j(\bm x)$, then
\begin{align}
  \mathcal{K}_t [g(\bm x(t_0))] = \mathcal{K}_t \left[ \sum_j c_j
  \vphi_j(\bm x(t_0)) \right]
  = \sum_{j} c_j \mathcal{K}_t[\vphi_j(\bm x(t_0))] 
    \label{eqn:koopman_solution}.
\end{align}
Hence, 
\begin{equation}
  \label{eq:inf_expansion}
  g(\bm x(t_0+t)) = \sum_{j} c_j \vphi_j(\bm x(t_0)) e^{\lambda_j t}. 
\end{equation}
Similarly, if we choose a vector-valued observable $\bm g: \mathscr{M} \rightarrow \mr^d$ with $\bm g:=(g_1(\bm x), g_2(\bm x), \dotsc, g_d(\bm x))^\top$, the system of observables becomes 
\begin{align}
    \frac{\dif \bm g(\bm x)}{\dif t} = \mathcal{K} \bm g(\bm x)
    = 
    \begin{bmatrix}
        \mathcal{K} g_1(\bm x) \\
        \mathcal{K} g_2(\bm x) \\
        \vdots \\
        \mathcal{K} g_d(\bm x) \\
    \end{bmatrix}
    = \sum_j \lambda_j \vphi_j(\bm x) \bm c_j, \label{eqn:Koopman_system_observable}
\end{align}
where $\bm c_j \in \mc^d$ is called the $j$th Koopman mode with
$\bm c_j := (c_j^1, c_j^2, \dotsc, c_j^d)^\top$. 
In general, there is no universal guide for choosing observables as this choice is problem dependent.  
A good set of observables can lead to a system that is significantly easier to
solve. An example from~\cite{brunton2016koopman,lusch2018deep} is illustrated in
~\Cref{append:obser}.

We finalize the introduction of the Koopman operator with the following simple example. 
Consider the system $\frac{\dif x}{\dif t}=\mu x$ with $x, \mu\in\mathbb{R}$ and $\mu\neq 0$. 
Then, one can easily verify that $\varphi_n(x):=x^n$ is an eigenfunction of the Koopman operator associated with this dynamical system, and the corresponding eigenvalue is $\lambda_n= n \mu$ with $n\in\mathbb{N}^+$ (a similar example is presented in~\cite{budivsic2012applied}).
According to~\Cref{eq:inf_expansion}, by setting $g(x)=x$ and let $x(0)=x_0$, we have
\[  x(t) = \sum_{j=1}^\infty c_j \vphi_j(x_0) e^{\lambda_j t} =\sum_{j=1}^\infty c_j x_0^j e^{\mu j t}. \]
Setting $t=0$ gives $x_0=x(0) = \sum_{j=1}^\infty c_j x_0^j$, which indicates $c_1=1$ and $c_j=0$ when $j\neq 1$. 
Therefore, we obtain the solution of the ODE as $x(t)=x_0e^{\mu t}$. 


\section{Adaptive Spectral Koopman Method}
\label{sec:ASK}

In this section, we introduce the adaptive spectral Koopman (ASK) method, which is a numerical method based on the Koopman operator and the spectral method to solve ODE systems.
Before describing details of this method, we introduce the notations used in this algorithm. 
Let $\bm x(t)$ denote the solution of an ODE system with an initial condition $\bm x(t_0)=\bm x_0$. Assume $t_0=0$ in~\cref{eqn:flow}, we consider solutions in time interval $[0, T]$ with $T>0$.
Letter $n$ denotes the number of ``check points'' (see details in~\cref{subsec:adapt}). 
The radius of the neighborhood of $\bm x(t)$ is denoted by $r$ while $\gamma$ is a parameter that controls the update of the neighborhood. 

\subsection{Finite-dimensional approximation}
\label{subsec:finite_approximation}
Based on the preliminaries introduced in~\Cref{subsec:generator},
we aim to identify the following truncated approximation of~\Cref{eq:inf_expansion}
\begin{equation}
  \label{eq:trun_expansion}
  g(\bm x(t)) \approx g_N(\bm x(t)) = \sum_{j=0}^N \tilde c_j \vphin_j(\bm x_0)
  e^{\tilde \lambda_j t}. 
\end{equation}
where $\vphin_j$ are polynomial approximations of $\vphi_j$, $\tilde\lambda_j$ and
$\tilde c_j$ approximate $\lambda_j$ and $c_j$, respectively.
Next, because $\frac{\dif \bm x}{\dif t}=\bff(\bm x)$, \Cref{eqn:Koopman_derivative_relation} and \Cref{eqn:Koopman_linear_relation} indicate that for any eigenfunction $\varphi$,
\begin{align*}
    \mathcal{K} \vphi &= \bff \cdot \nabla \vphi 
    = \left( f_1 \frac{\partial \vphi}{\partial x_1} + f_2 \frac{\partial \vphi}{\partial x_2} + ... + f_d \frac{\partial \vphi}{\partial x_d} \right) \\
    &= \left( f_1 \frac{\partial}{\partial x_1} + f_2 \frac{\partial}{\partial x_2} + ... + f_d \frac{\partial}{\partial x_d} \right)(\vphi).
\end{align*}
Thus, 
\begin{align}
    \mathcal{K} = f_1 \frac{\partial}{\partial x_1} + f_2 \frac{\partial}{\partial x_2} + ... + f_d \frac{\partial}{\partial x_d}. \label{eqn:finite_approximation_fundamental}
\end{align}
Here, we consider the case with $d\leq 3$, and adopt the approaches in the spectral-collocation method.  Specifically, our algorithm uses Gauss-Lobatto points for the interpolation of $\varphi$ and approximates (partial) derivatives with differentiation matrices (see e.g.,~\cite{karniadakis2005spectral, tang2006spectral, hesthaven2007spectral}) in \Cref{eqn:finite_approximation_fundamental}.
Consequently, the first step is to discretize $\mathcal{K}$. 

\begin{enumerate}[(1)]
  \item When $d = 1$. Let $\{\xi_i\}_{i=0}^N$ be the Gauss-Lobatto
    points and the polynomial interpolation of $\varphi(\bm x)$ is 
    \[\varphi(x)\approx \vphin(x) := \sum_{i=0}^N \vphin(\xi_i) P_i(x),\]
    where the basis functions $P_j$ are Lagrange polynomials satisfying $P_j(\xi_i) = \delta_{ij}$ and $\delta_{ij}$ is the Kronecker delta function. 
    Namely, $\vphin(x)$ is the projection of $\varphi(x)$ on the space $\text{span}\{P_j(x)\}_{j=0}^N$. 
    Let $\bvphin=[\vphin(\xi_0), \vphin(\xi_1), \dotsc, \vphin(\xi_N)]^\top$, we have
\begin{align}
  \mathcal{K}\bvphin = \diag(\bff(\xi_0), \bff(\xi_1), \dotsc, \bff(\xi_N)) \tensor D\bvphin :=\tensor K\bvphin, 
  \label{eqn:finite_approximation_1D}
\end{align}
    where $\tensor D$ is the differentiation matrix associated with $\{\xi_i\}_{i=0}^N$ and $\tensor K$ is an $(N+1)\times (N+1)$ matrix. 
    Here, we abuse the notation to let
    $\mathcal{K}\bvphin=[\mathcal{K}\vphin(\xi_0), \mathcal{K}\vphin(\xi_1),
    \dotsc, \mathcal{K}\vphin(\xi_N)]^\top$,
    and similar notations are used in the following $d=2,3$ cases.

  \item When $d = 2$. Let $\{\xi_i\}_{i=0}^N$ and $\{\eta_j\}_{j=0}^N$ be 
 the Gauss-Lobatto points of $x_1$ and $x_2$, respectively.
    Every eigenfunction $\vphi$ is now a bivariate function, whose polynomial interpolation $\vphin$ is
    \[\varphi(x_1,x_2)\approx \vphin(x_1,x_2) := \sum_{i=0}^N \sum_{j=0}^N \vphin(\xi_i,\eta_j) P_i(x_1)P_j(x_2).\]
    Hence, we define a matrix $\tensor\Phi^N$ as
\begin{align*}
    \tensor\Phi^N = 
	\begin{bmatrix}
		\vphin(\xi_0, \eta_0) &\vphin (\xi_0, \eta_1) & \dots & \vphin(\xi_0, \eta_N) \\
		\vphin(\xi_1, \eta_0) & \vphin(\xi_1, \eta_1) & \dots & \vphin(\xi_1, \eta_N) \\
		\vdots			& \vdots		  & \ddots & \vdots \\
		\vphin(\xi_N, \eta_0) & \vphin(\xi_N, \eta_1) & \dots & \vphin(\xi_N, \eta_N) \\
	\end{bmatrix}.
\end{align*}
Let $\tensor D_1$ and $\tensor D_2$ be the differentiation matrices for $x_1$ and $x_2$, respectively, and $\tensor F_1$ and $\tensor F_2$ be the matrices of $f_1$ and $f_2$ evaluated at $(\xi_i, \eta_j)$.
Also, we denote $\mathcal{K}\tensor\Phi^N$ the matrix with elements $(\mathcal{K}\tensor\Phi^N)_{ij}=\mathcal{K}\tensor\Phi^N(\xi_i,\eta_j)$.
Then, $\mathcal{K}\tensor\Phi^N$ can be computed as
\begin{align*}
  \mathcal{K}\tensor\Phi^N = \tensor F_1 \odot \left(\tensor D_1\tensor\Phi^N
  \right) + \tensor F_2 \odot \left(\tensor\Phi^N \tensor D_2^\top \right), 
\end{align*}
where $\odot$ denotes the Hadamard product. 
In the computation, we vectorize $\tensor\Phi^N$ (along columns) to obtain
\begin{align*}
  \mathcal{K} \vect(\tensor\Phi^N) 
    &= \vect(\tensor F_1) \odot \Big( (\tensor I \otimes \tensor D_1) \vect(\tensor\Phi^N) \Big) 
        + \vect(\tensor F_2) \odot \Big( (\tensor D_2 \otimes \tensor I) \vect(\tensor\Phi^N) \Big) \\
    &= \Big[ \diag \big(\vect(\tensor F_1) \big) (\tensor I \otimes
    \tensor D_1) 
        +  \diag \big(\vect(\tensor F_2) \big) (\tensor D_2 \otimes \tensor I) \Big] \Big( \vect(\tensor\Phi^N) \Big) \\
  & := \tensor K\vect(\tensor\Phi^N),
\end{align*}
where $\otimes$ denotes the Kronecker product, $\tensor I$ is the identity matrix, and $\tensor K$ is an $(N+1)^2 \times (N+1)^2$ matrix.

  \item When $d = 3$. Let $\{\xi_i\}_{i=0}^N$, $\{\eta_j\}_{j=0}^N$, and $\{\zeta_k\}_{k=0}^N$ be the Gauss-Lobatto points of $x_1$, $x_2$, and $x_3$, respectively. 
    The collocation points are then $(\xi_i, \eta_j, \zeta_k)$. 
    In this case, $\vphi$ is approximated as
    \[\varphi(x_1,x_2, x_3)\approx \vphin(x_1,x_2,x_3) := \sum_{i=0}^N \sum_{j=0}^N \sum_{k=0}^N\vphin(\xi_i,\eta_j, \zeta_k) P_i(x_1)P_j(x_2)P_k(x_3).\]
    Hence, the values of $\vphin$ at the collocation points can be represented by a tensor $\tensor\Phi^N$ whose frontal slices are written as
\begin{align*}
	\tensor{\Phi}^N(:,:, k) = 
	\begin{bmatrix}
		\vphin(\xi_0, \eta_0, \zeta_k) &\vphin (\xi_0, \eta_1, \zeta_k) & \dots & \vphin(\xi_0, \eta_N, \zeta_k) \\
		\vphin(\xi_1, \eta_0, \zeta_k) & \vphin(\xi_1, \eta_1, \zeta_k) & \dots & \vphin(\xi_1, \eta_N, \zeta_k) \\
		\vdots			& \vdots		  & \ddots & \vdots \\
		\vphin(\xi_N, \eta_0, \zeta_k) & \vphin(\xi_N, \eta_1, \zeta_k) & \dots & \vphin(\xi_N, \eta_N, \zeta_k) 
	\end{bmatrix}.
\end{align*}
With the n-mode multiplication in tensor algebra, we arrive at a compact representation of the approximation,
\begin{align*}
  \mathcal{K}\tensor \Phi^N = \bfF_1 \odot \Big(\tensor \Phi^N \times_1
  \tensor D_1\Big) + \bfF_2 \odot \Big(\tensor \Phi^N \times_2 \tensor D_2\Big) +
  \bfF_3 \odot \Big(\tensor \Phi^N \times_3 \tensor D_3\Big), 
\end{align*}
where $\times_p$ denotes the mode-$p$ tensor-matrix multiplication. Here,
$\tensor D_1, \tensor D_2, \tensor D_3$ are the differentiation matrices, and $\bfF_1, \bfF_2, \bfF_3$ denote the tensors resulting from $f_1, f_2, f_3$ evaluated at $(\xi_i, \eta_j, \zeta_k)$. Following the same idea of vectorization, we rewrite the tensor representation as
\begin{align*}
  \mathcal{K} \vect(\tensor \Phi^N)
	&= \vect(\bfF_1) \odot \Big( (\tensor I \otimes \tensor I  \otimes
  \tensor D_1) \vect(\tensor \Phi^N) \Big) \\
	&+ \vect(\bfF_2) \odot \Big( (\tensor I \otimes \tensor D_2 \otimes \tensor I)
  \vect(\tensor \Phi^N) \Big)  \\  
	&+  \vect(\bfF_3) \odot \Big( (\tensor D_3 \otimes \tensor I \otimes \tensor
  I) \vect(\tensor \Phi^N) \Big) \\
	&= \Big[ \diag\big(\vect(\bfF_1)\big) (\tensor I \otimes \tensor I  \otimes \tensor  D_1) \\
	&+ \diag\big(\vect(\bfF_2)\big) (\tensor I \otimes \tensor D_2 \otimes
  \tensor I)  \\
	&+ \diag\big(\vect(\bfF_3)\big) (\tensor D_3 \otimes \tensor I \otimes
  \tensor I) \Big] \Big( \vect(\tensor \Phi^N) \Big) \\
  & := \tensor K\vect(\tensor \Phi^N) .
\end{align*}
where $\tensor K$ is an $(N+1)^3 \times (N+1)^3$ matrix.
\end{enumerate} 
In all these cases, the discretized generator $\mathcal{K}$ can be represented as a matrix $\tensor K$. 
For $d=2$ and $d=3$, the total number of eigenfunctions used in~\Cref{eq:trun_expansion} is $(N+1)^2$ and $(N+1)^3$, respectively, instead of $(N+1)$.
For brevity, $g_N$ is still used to denote the approximated observable for different $d$.
The derivation of higher dimensional systems amounts to further extensions of the three-dimensional case by the Kronecker product.


\subsection{Eigen-decomposition} 
\label{subsec:eigen_decomposition}
Now the eigenvalue problem of the Koopman operator in~\Cref{eqn:Koopman_eigen} is discretized as the eigenvalue problem of matrix $\tensor K$, i.e.,
$\tensor K \bm v = \tilde\lambda \bm v$, where $\tilde\lambda \in \mc$ and $\bm v$ is a
complex vector. 
The vector $\bm v$ is an approximation of $\mathcal{K}$'s eigenfunction $\varphi$ evaluated at the collocation points and $\tilde\lambda$ is the approximation of the associated eigenvalue of $\mathcal{K}$. 
The matrix form of the eigenvalue problem is 
\begin{equation}
  \label{eq:eigen}
  \tensor K\tensor V = \tensor V\tensor \Lambda,
\end{equation}
where $\tensor V$ consists of columns $\bm v_j$ and the diagonal elements of
$\tensor\Lambda$ are $\tilde\lambda_j$.
By construction, for $d=1$, $(\bm v_j)_i=\vphin_j(\xi_i)\approx
\varphi_j(\xi_i)$, and for $d=2$ or $3$, $\bm v_j=\vect(\tensor \Phi^N_j)$, where $\tensor\Phi^N_j$ approximates the values of eigenfunction $\varphi_j$ at the collocation points.
Of note, the collocation points in multi-dimensional cases are constructed by the tensor product of one-dimensional collocation points, but we have not specified how to obtain such points, the details of which are given in~\Cref{subsec:solution}. 
Also, we emphasize that these collocation points are related to $\bm x$ instead of $t$. 
In other words, ASK discretizes $\varphi(\bm x)$ in space instead of discretizing $\bm x(t)$ in time, which is different from conventional spectral methods for ODEs.


\subsection{Constructing the solution} 
\label{subsec:solution}
Let us first consider $d=1$. 
By the eigen-decomposition, one can access values of eigenfunctions at the Gauss-Lobatto points $\bm\Xi:=\{\xi_i\}_{i=0}^N$, where $\xi_0<\xi_1<\dotsc<\xi_N$.
Therefore, $\varphi(\bm x_0)$ can be approximated when $\xi_0\leq \bm x_0\leq \xi_N$.
To avoid polynomial interpolation, ASK uses an even number for $N$ and sets $\xi_{N/2}=\bm x_0$. 
Based on this setting, we consider a neighborhood of $\bm x_0$ with radius $r$, i.e., $[\bm x_0-r, \bm x_0+r]$, where $r$ is tunable. Gauss-Lobatto points are then generated such that $\bm x_0-r = \xi_0<\xi_1<\dotsc<\xi_{N/2}=\bm x_0< \dotsc <\xi_N=\bm x_0+r$.  Thus, $g_N$ is constructed as
\begin{equation}
  \label{eq:1d_sol}
  g_N(\bm x(t)) = \sum_{j=0}^N \tilde c_j\vphin_j(\bm x_0)e^{\tilde\lambda_j t} =
  \sum_{j=0}^N \tilde c_j\varphi^N_j(\xi_{N/2})e^{\tilde\lambda_j t} = 
  \sum_{j=0}^N \tilde c_j(\bm v_j)_{N/2}e^{\tilde\lambda_j t},
\end{equation}
where $\bm v_j$ are eigenvectors of matrix $\tensor K$ computed in~\Cref{subsec:finite_approximation}. 

To approximate Koopman modes $c_j$, we set $t=0$ in~\Cref{eq:1d_sol}, which yields 
\[g(\bm x_0)\approx g_N(\bm x_0) = \sum_{j=0}^N \tilde c_j\vphin_j(\bm x_0),\]
which holds for different initial state $\bm x_0$, e.g., 
\[g(\xi_i)\approx g_N(\xi_i) = \sum_{j=0}^N \tilde c_j\vphin_j(\xi_i), \quad i=0,\dotsc, N,\]
where $\xi_i$ are the aforementioned Gauss-Lobatto points. 
Thus, we can obtain $\tilde c_j$ by solving a linear system $\tensor V\bm c=g(\bm\Xi)$,
where $\tensor V$ is defined in~\Cref{eq:eigen}, $g(\bm\Xi) = (g(\xi_0), \dotsc,
g(\xi_N))^\top$ and $\bm c=(\tilde c_0, \dotsc, \tilde c_N)^\top$.
As an example, if $g(\bm x) := \bm x$, then $g(\bm\Xi) = (\xi_0, \dotsc, \xi_N)^\top$.

For $d=2$, we consider the neighborhood of $\bm x_0=(x_0^1, x_0^2)^\top$ as $[x_0^1-r, x_0^1+r]\times [x_0^2-r, x_0^2+r]$. 
Similarly, for $d=3$, the neighborhood is $[x_0^1-r, x_0^1+r]\times [x_0^2-r, x_0^2+r]\times [x_0^3-r, x_0^3+r]$, where $\bm x_0=(x_0^1, x_0^2, x_0^3)^\top$.
We then generate $(N+1)$ Gauss-Lobatto points in each direction and use the tensor product rule to construct multi-dimensional collocation points.
In practice, one can use standard Gauss-Lobatto points in the spectral method such as Legendre-Gauss-Lobatto and Chebyshev-Gauss-Lobatto points.
Now the set of all collocation points is $\tensor\Xi = \{(\xi_i,\eta_j)\}_{i,j=0}^N$ for $d=2$ and $\tensor\Xi = \{(\xi_i,\eta_j,\zeta_k)\}_{i,j=0}^N$ for $d=3$. 
Of note, the isotropic set up is applied here for demonstration purpose, i.e., we use a fixed $r$ in each direction and admit the same number of Gauss-Lobatto points in each dimension. However, this is not necessarily the optimal choice, and one can use different $r$ and different numbers of Gauss-Lobatto points in different directions.

Next, since we vectorize matrix (or tensor) $\tensor\Phi^N$ column by column (or slice by slice) as shown in~\Cref{subsec:finite_approximation}, $\varphi_j(\bm x_0)$ is again approximated by the ``middle'' element of vector $\vect(\tensor\Phi_j^N)$, which leads to
\begin{equation}
  \label{eq:nd_sol}
  \bm g_N(\bm x(t)) =
  \begin{cases}
    \displaystyle\sum_{j=0}^{(N+1)^2-1}\bm c_j \: (\bm v_j)_{[(N+1)^2-1]/2} \: e^{\tilde\lambda_j t}, & d=2; \\
    \displaystyle\sum_{j=0}^{(N+1)^3-1}\bm c_j \: (\bm v_j)_{[(N+1)^3-1]/2} \: e^{\tilde\lambda_j t}, & d=3. \\
  \end{cases}
\end{equation}
Here, each element of the modes $\bm c_j=(\tilde c_j^1, \cdots,\tilde c_j^d)^\top$ corresponds to a component of $\bm g$, and it is computed in the same manner as in the $d=1$ case. 
For example, for $d=2$, i.e., $\bm g(\bm x)=(g_1(\bm x), g_2(\bm x))^\top$
(correspondingly, $\bm g_N(\bm x)=(g_N^1(\bm x), g_N^2(\bm x))^\top$), we have
$g_N^1(\bm x(t))=\sum_{j=0}^{(N+1)^2-1}\tilde c^1_j\vect(\tensor \Phi_j^N)_{[(N+1)^2-1]/2}e^{\tilde\lambda_j t}$.
Consider matrix $g_1(\tensor\Xi)$ whose elements are $(g_1(\tensor\Xi))_{ij} = g_1(\xi_i,\eta_j)$.
The modes $\bm c^1=(\tilde c_0^1, \dotsc, \tilde c_N^1)^\top$ are obtained by solving a linear system $\tensor V \bm c^1=\vect(g_1(\tensor\Xi))$.
Similarly, we can compute the modes for $g_N^2(\bm x)$. 
In practice, our algorithm solves the linear system $\tensor V\tensor C = \bm g(\bm\Xi)$, where $\tensor C=(\bm c^1, \bm c^2)$ and $\bm g(\bm\Xi)=(\vect(g_1(\bm\Xi)), \vect(g_2(\bm \Xi)))$.
The modes for $d=3$ are computed in the same manner. 
In addition, a pseudocode is presented in~\Cref{append:code} to illustrate how the solution is constructed.


\subsection{Adaptivity}
\label{subsec:adapt}

Since we apply a finite-dimensional approximation of the Koopman operator and exploit the Lagrange interpolation to approximate the eigenfunctions, the accuracy of the solution may decay as time evolves, especially for highly nonlinear systems.
To further improve the accuracy, we propose an adaptive approach to update $\tensor V,\tensor \Lambda$ and $\bm c_j$.
The main idea is to identify the time to repeat the procedure
described in~\Cref{subsec:finite_approximation}--~\Cref{subsec:solution}.
To this end, we set check points $0<\tau_1<\tau_2<\dotsc<\tau_n<T$ to examine the ``validity'' of the neighborhood of $\bm x(\tau_k)$. 
Specifically, the component of $\bm x(\tau_k)=(x_1(\tau_k),\dotsc,x_d(\tau_k))^\top$ is acceptable if $x_i(\tau_k)\in R_i$ where
\begin{align}
  R_i := [L_i + \gamma r_i, U_i - \gamma r_i].
  \label{eqn:acceptable_range}
\end{align}
Here, $L_i$ and $U_i$ are the lower and upper bounds, $r_i$ is the radius in the $i$th direction, and $\gamma\in (0,1]$ is a tunable parameter.
Recall that the isotropic setup is used in this work, thus $r_i\equiv r$.
In the initial step, $L_i := x_0^i-r_i$ and $U_i := x_0^i+r_i$, i.e., $\gamma=1$.
In practice, one can fix $\gamma=1$ (or other real number in $(0,1]$) and tune $r_i$ only.
Hereby, we keep both $\gamma$ and $r_i$ for future extension to anisotropic design and more advanced adaptivity criterion.

If $x_i(\tau_k) \in R_i$ for all $i$, then $R_1\times \dotsc \times R_d$ is a valid neighborhood of $\bm x(\tau_k)$. 
Otherwise, we update all $L_i, U_i$ and reconstruct $\vphin_j, \tilde\lambda_j,
\tilde c_j$ to obtain $\bm x(t)~ (t>\tau_k)$ as follows:
\begin{enumerate}

  \item Set $L_i = x_i(\tau_k) - r_i, U_i = x_i(\tau_k) + r_i$,
    $1\leq i\leq d$. \label{itm:update_1}

  \item Generate Gauss-Lobatto points and the differentiation matrix in each interval  $[L_i^k, U_i^k]$. Repeat the procedure in~\Cref{subsec:finite_approximation} to compute matrix $\tensor K$.
    \label{itm:update_2}

  \item Repeat the eigendecomposition in~\Cref{subsec:eigen_decomposition} to update $\tensor V$ and $\tensor \Lambda$ in~\Cref{eq:eigen}. 
    \label{itm:update_3}

  \item Compute coefficient $\bm c_j$ as in~\Cref{subsec:solution} with the updated $\tensor V$. 
    \label{itm:update_4}  

  \item Construct solution $\bm x(t)$ by replacing $e^{\tilde\lambda_jt}$ with $e^{\tilde\lambda_j (t-\tau_k)}$ in~\Cref{eq:1d_sol} (or~\Cref{eq:nd_sol} for $d=2,3$).

\end{enumerate}
Note that the modification of constructing the solution in step 5 is necessary because when an update is performed, we need to set $t_0=\tau_k$ and $\bm x_0=\bm x(t_0)=\bm x(\tau_k)$.

The parameter $\gamma$ decides how often we update the neighborhood and reconstruct the solution. By construction, a larger $\gamma$ demands updating the eigendecomposition more frequently. 
The extreme case $\gamma = 1$ enforces the update at every check point.
In this work, we set $\tau_{k+1}-\tau_k\equiv \Delta\tau$. 
Notably, since the solution is discretized \emph{in space} instead of in time as in conventional ODE solvers, the check points are different from time grids $0<t_1<t_2<\dotsc$ in those solvers.
If we set $k=0$, then no update is made, which indicates that the solution $\bm x(t)$ only relies on the eigendecomposition based on $\bm x_0$ (see the example pseudocode in~\Cref{append:code}). 


\subsection{Properties of the algorithm} 
\label{subsection:property}

In this work $\bm x, \bff, \bm g$ are real-valued functions.
Now we show that the solutions obtained by ASK are real numbers, although $\tensor V, \tensor\Lambda, \bm c_j$ may contain complex values. 
We start with reiterating a well-known conclusion:
\begin{lemma}
  \label{lemma:conjugate_pairs}
    If a real matrix has complex eigenvalues, then they always occur in complex conjugate pairs. Furthermore, a complex conjugate pair of eigenvalues have a complex conjugate pair of associated eigenvectors.
\end{lemma}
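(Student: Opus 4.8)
The plan is to reduce both assertions to the single elementary fact that entrywise complex conjugation commutes with multiplication by a \emph{real} matrix. First I would fix the real matrix $\tensor K$ and suppose $\lambda\in\mc$ is an eigenvalue with eigenvector $\bm v\neq\bm 0$, so that $\tensor K\bm v=\lambda\bm v$. Applying conjugation to both sides, the crux is the chain of identities
\[ \tensor K\,\overline{\bm v} = \overline{\tensor K}\,\overline{\bm v} = \overline{\tensor K\bm v} = \overline{\lambda\bm v} = \overline{\lambda}\,\overline{\bm v}, \]
where the first equality uses $\overline{\tensor K}=\tensor K$ (because every entry of $\tensor K$ is real) and the remaining ones use that conjugation distributes over sums and products entrywise. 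This single line already yields that $\overline{\lambda}$ is an eigenvalue of $\tensor K$ with eigenvector $\overline{\bm v}$.

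Next I would separate the genuinely complex case. If $\lambda$ has nonzero imaginary part then $\lambda\neq\overline{\lambda}$, so $\lambda$ and $\overline{\lambda}$ are two distinct eigenvalues forming a conjugate pair. To see that the corresponding eigenvectors also form a genuine conjugate pair, I would argue that $\bm v$ cannot be real: if it were, then $\tensor K\bm v$ would be a real vector while $\lambda\bm v$ would have nonzero imaginary part, contradicting $\tensor K\bm v=\lambda\bm v$. Hence $\bm v\neq\overline{\bm v}$, and $\{\bm v,\overline{\bm v}\}$ is the claimed conjugate pair of eigenvectors associated with $\{\lambda,\overline{\lambda}\}$.

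I do not expect a genuine obstacle, as the statement is classical; the only point demanding care is making explicit that $\overline{\tensor K}=\tensor K$ is exactly where the realness hypothesis enters, and that the distributivity $\overline{\tensor K\bm v}=\overline{\tensor K}\,\overline{\bm v}$ follows termwise from $\overline{a+b}=\bar a+\bar b$ and $\overline{ab}=\bar a\,\bar b$. If one additionally wanted the algebraic multiplicities of $\lambda$ and $\overline{\lambda}$ to coincide (not strictly required by the statement), the same conjugation idea applied to the real-coefficient characteristic polynomial $p(z)=\det(\tensor K-z\tensor I)$ gives $p(\overline{\lambda})=\overline{p(\lambda)}$, so the two roots occur with identical multiplicity in the factorization of $p$ over $\mc$.
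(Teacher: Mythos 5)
Your proposal is correct and follows essentially the same route as the paper's proof: conjugate both sides of $\tensor K\bm v=\lambda\bm v$ and use $\overline{\tensor K}=\tensor K$ to conclude $(\overline{\lambda},\overline{\bm v})$ is also an eigenpair. Your additional observations (that $\lambda\neq\overline{\lambda}$ and $\bm v\neq\overline{\bm v}$ when $\lambda$ is genuinely complex, and the multiplicity remark via the characteristic polynomial) merely make explicit what the paper compresses into ``The claim follows.''
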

\begin{proof}
    Suppose the matrix $\tensor K \in \mr^{n \times n}$ has an eigenpair $\bm v$ and $\lambda$ such that $\tensor K \bm v = \lambda \bm v$. Let $\bar{\cdot}$ operator denote the complex conjugate. Taking the complex conjugate of both sides of the equation, we have $\bar{\tensor K} \bar{\bm v} = \bar{\lambda} \bar{\bm v}$. However, $\tensor K = \bar{\tensor K}$ since $\tensor K$ has real entries. Thus, $\tensor K \bar{\bm v} = \bar{\lambda} \bar{\bm v}$. The claim follows.  
\end{proof}

Our main theorem is presented next:
\begin{theorem}\label{thm:real_solutions}
    ASK yields real-valued solutions for dynamical systems with real-valued $\bm x, \bff$ and $\bm g$. 
\end{theorem}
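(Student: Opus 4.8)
The plan is to build directly on \cref{lemma:conjugate_pairs}: since $\tensor K$ is real, its spectrum splits into real eigenvalues and complex-conjugate pairs, and for each such pair the eigenvectors are conjugates of one another. The ASK solution in \cref{eq:1d_sol} (and \cref{eq:md_sol}) is a sum $g_N(\bm x(t)) = \sum_j c_j (\bm v_j)_m e^{\lambda_j t}$, where $m$ denotes the fixed ``middle'' index. I would establish reality by showing this sum is invariant under complex conjugation, which forces it to be real. The strategy is to verify that all three factors of each term---the mode $c_j$, the sampled eigenvector entry $(\bm v_j)_m$, and the exponential $e^{\lambda_j t}$---transform consistently under the conjugation that swaps a pair $(\lambda_j,\bar\lambda_j)$.

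The eigenvalue and eigenvector factors are handled by the Lemma: a real eigenvalue contributes a real $\lambda_j$ and a real eigenvector (the null space of the real matrix $\tensor K - \lambda_j\tensor I$ admits a real basis), while a conjugate pair $(\lambda_j,\lambda_{j'})$ with $\lambda_{j'}=\bar\lambda_j$ satisfies $(\bm v_{j'})_m = \overline{(\bm v_j)_m}$ and $e^{\lambda_{j'}t} = \overline{e^{\lambda_j t}}$. The remaining step is to show the modes inherit the same symmetry, namely $c_{j'} = \bar c_j$ for a conjugate pair and $c_j\in\mr$ for a real eigenvalue. For this I would conjugate the defining linear system $\tensor V\bm c = g(\bm\Xi)$. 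Because $g(\bm\Xi)$ is real, this gives $\bar{\tensor V}\bar{\bm c} = g(\bm\Xi)$; since conjugating $\tensor V$ merely permutes its columns (real-eigenvalue columns are fixed, conjugate-pair columns are swapped), one has $\bar{\tensor V} = \tensor V\tensor P$ for a permutation matrix $\tensor P$, so $\tensor V\tensor P\bar{\bm c} = \tensor V\bm c$ and hence $\bar{\bm c} = \tensor P\bm c$, which is exactly the claimed conjugate symmetry of the modes.

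With all three factors accounted for, I would group the sum: each real-eigenvalue term is a product of real quantities and is therefore real, and each conjugate pair contributes $c_j(\bm v_j)_m e^{\lambda_j t} + \overline{c_j(\bm v_j)_m e^{\lambda_j t}} = 2\,\mathrm{Re}\!\left(c_j(\bm v_j)_m e^{\lambda_j t}\right)$, which is again real. Summing over all groups yields a real value for $g_N(\bm x(t))$ at every $t$, and the argument is identical for $d=2,3$ (and for vector observables, component by component), so ASK returns real-valued solutions.

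The main obstacle is the bookkeeping around the eigenvectors returned by the numerical eigensolver. The clean identity $\bar{\tensor V} = \tensor V\tensor P$ holds only when conjugate-pair eigenvectors are taken as exact conjugates; a generic solver returns them up to an arbitrary complex scaling, which promotes $\tensor P$ to a permutation times a diagonal scaling and complicates the mode-symmetry step (it also tacitly requires $\tensor K$ to be diagonalizable so that $\tensor V$ is invertible). A robust way to sidestep this entirely is to note that $\bm c = \tensor V^{-1}g(\bm\Xi)$ turns the solution into $g_N(\bm x(t)) = \bm e_m^\top \tensor V e^{t\tensor\Lambda}\tensor V^{-1} g(\bm\Xi) = \bm e_m^\top e^{t\tensor K} g(\bm\Xi)$; since $\tensor K$ is real, the matrix exponential $e^{t\tensor K}$ is real, and the conclusion follows immediately without any reference to eigenvector normalization.
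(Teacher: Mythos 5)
Your proposal is correct, and its first two paragraphs are in substance the paper's own proof. The paper likewise starts from \cref{lemma:conjugate_pairs}, establishes conjugate symmetry of the modes, and then checks that each conjugate pair of terms sums to a real number: where you derive $\bar{\bm c}=\tensor P\bm c$ by conjugating the system $\tensor V\bm c = g(\bm\Xi)$, the paper argues equivalently that the rows of $\tensor V^{-1}$ come in conjugate pairs, so that $c_m=\bm u\cdot \bm g(\tensor\Xi)$ and $\bar{c}_m=\bar{\bm u}\cdot \bm g(\tensor\Xi)$ are both entries of $\bm c$; and where you write the pair contribution as $2\,\mathrm{Re}\!\left(c_j(\bm v_j)_m e^{\lambda_j t}\right)$, the paper verifies the same identity by direct expansion with $\nu=A+Bi$, $\lambda=C+Di$, $c_m=E+Fi$, arriving at $2Pe^{Ct}\cos(Dt)-2Qe^{Ct}\sin(Dt)\in\mr$. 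Where you genuinely depart from the paper is your final paragraph, and that content is valuable. The normalization caveat you raise applies to the paper's proof too: its claim that $\bar{\bm v}$ is ``also a column of $\tensor V$'' is exactly the assumption that the eigensolver returns conjugate pairs as exact conjugates, and both arguments tacitly require $\tensor V$ to be invertible. Your closing observation---that the ASK output equals $\bm e_m^\top \tensor V e^{t\tensor\Lambda}\tensor V^{-1} g(\bm\Xi) = \bm e_m^\top e^{t\tensor K} g(\bm\Xi)$, which is real because the exponential of a real matrix is real---is cleaner and strictly more robust: each term $c_j(\bm v_j)_m e^{\lambda_j t}$ is invariant under rescaling $\bm v_j \mapsto \alpha \bm v_j$ (the mode rescales by $\alpha^{-1}$), so this version covers arbitrary solver normalizations, needs no case split between real eigenvalues and conjugate pairs, and isolates diagonalizability as the only hypothesis. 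What the paper's route buys is concreteness---an explicit real formula for a conjugate-pair contribution, mirroring what one actually implements; what your matrix-exponential route buys is generality and essentially a one-line proof.
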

\begin{proof}
  We only need to consider the $d=1$ case since the solution for high-dimensional cases are constructed in the same manner.
  Let $\bm v$ be a eigenvector, then it is a column of matrix $\tensor V$ in~\Cref{eq:eigen}.
  It is only necessary to consider the case where $\bm v$ is not a real-valued vector.
  According to~\Cref{lemma:conjugate_pairs}, $\bar{\bm v}$ is also a column of $\tensor V$.
  Let $\bm u$ be a row of $\tensor V^{-1}$ such that $\bm u\cdot \bm v=1$ and $\bm u\cdot \tilde{\bm v}=0$, where $\tilde{\bm v}$ is any column of $\tensor V$ other than $\bm v$.
  It is clear that $\bar{\bm u}\cdot\bar{\bm v}=1$ and $\bar{\bm u}\cdot\tilde{\bm v}=0$.
  Therefore, $\bar{\bm u}$ is also a row of $\tensor V^{-1}$.
  Next, as shown in~\Cref{subsec:solution}, we compute the modes $\bm c$ as $\bm c = \tensor V^{-1} \bm g(\tensor \Xi)$.
    Let $c_m$ be the element of $\bm c$ such that $c_m=\bm u\cdot \bm g(\tensor
    \Xi)$, then $\bar{\bm u}\cdot \bm g(\tensor \Xi)=\overline{\bm u\cdot \bm
    g(\tensor\Xi)} =\bar{c}_m$ is also an element of $\bm c$. 

    
    In the numerical solution, it suffices to consider $c_m \nu e^{\lambda t} + \bar c_m \bar{\nu} e^{\bar{\lambda} t}$, where $\nu \in \mc$ denotes the middle element of the eigenvector $\bm v$. 
    For convenience, we denote $\nu = A + Bi, \lambda = C + Di, c_m = E + Fi$. Here, $A,B,C,D,E,F \in \mr$. Then,
    \begin{align*}
        c_m \nu e^{\lambda t} + \bar c_m \bar{\nu} e^{\bar{\lambda} t} &=  (E + Fi)(A + Bi) e^{ ( C + Di) t} +  (E - Fi)(A - Bi) e^{ ( C - Di) t} \\
        &= (P + Qi) e^{ ( C + Di) t} + (P - Qi) e^{ ( C - Di) t} \\
        &= \big(Pe^{Ct} + Qe^{Ct} i \big) e^{ Dt i} + \big(Pe^{Ct} - Qe^{Ct} i \big) e^{ - Dt i} \\
        &= \big(Pe^{Ct} + Qe^{Ct} i \big) [\cos(Dt) + \sin(Dt)i] \\
        &\quad + \big(Pe^{Ct} - Qe^{Ct} i \big) [\cos(Dt) - \sin(Dt)i] \\
        &= 2 Pe^{Ct} \cos(Dt) + 2 \big(Qe^{Ct} i) \sin(Dt) i \\
        &= 2 Pe^{Ct} \cos(Dt) - 2 Q e^{Ct} \sin(Dt) 
        \in \mr,
    \end{align*}
    among which $P = AE - BF$ and $Q = AF + BE$. 

\end{proof}

\begin{remark}
In practice, the imaginary part may be non-zero due to the round off error. 
In all numerical examples shown in this work, the magnitude of the imaginary part is extremely small (if it is non-zero), and we only keep the real part of the solution.
\end{remark}


\subsection{Algorithm summary}
\label{subsection:algo}

As a summary, ~\Cref{subsec:finite_approximation} to~\Cref{subsec:solution} present a numerical scheme that solves an autonomous ODE system using the eigendecomposition and a linear system solver. 
\Cref{subsec:adapt} introduces a heuristic adaptivity criterion to repeat the aforementioned procedure at appropriate time points to further enhance the accuracy.
We conclude the algorithm in~\Cref{algo:pseudo_code}.
\begin{algorithm}[htpb]
  \caption{Adaptive spectral Koopman method}
    \label{algo:pseudo_code}
    \begin{algorithmic}[1]
        \REQUIRE $n, T, N, \bm x_0, r, \gamma$
        \STATE{Set check points at $0=\tau_0 < \tau_1 < ... < \tau_n < T$.}
        \STATE{Let $L_i=x_0^i-r_i, U_i = x_0^i+r_i$ and set neighborhood $R_i$ as $R_i=[L_i+\gamma r_i, U_i-\gamma r_i]$ for $i = 1,2,...,d$, where $r_i=r$.} 
        \STATE{Generate Gauss-Lobatto points and differentiation matrix $\tensor D_i$ in $[L_i, U_i]$ for $i = 1,2,...,d$. 
        Construct collocation points $\bm\Xi$ for $d>1$ using the tensor product rule. (For $d=1$, $\bm\Xi$ is the set of the Gauss-Lobatto points.)}
        \STATE{Construct matrix $\tensor K$ using the formulas in~\Cref{subsec:finite_approximation}}
        \STATE{Compute eigen-decomposition $\tensor K\tensor V =\tensor V \tensor \Lambda$}
        \STATE{Solve linear system $\tensor V\tensor C=\bm g(\bm\Xi)$, where the $l$th column of matrix $\bm g(\bm\Xi)$ consists of the $l$th component of all collocation points (see~\Cref{subsec:solution}).}
        \FOR{$k = 1,2,3,\dotsc,n$}
        \STATE{Let $\nu_j$ be the middle element of the $j$th column of $\tensor V$. 
        Construct solution at time $\tau_k$ as $\bm g(\bm x(\tau_k)) = \displaystyle\sum_{j} \tensor C(j,:)\nu_j e^{\tilde\lambda_j (\tau_k-\tau_{k-1})}$, where $\tensor C(j,:)$ is the $j$th row of $\tensor C$.}
            \IF{$(\bm x(\tau_k))_i \notin R_i$ for any $i$}
                \STATE{Reset $L_i=x_i(\tau_k)-r_i, U_i=x_i(\tau_k)+r_i$ and $R_i=[L_i+\gamma r_i, U_i-\gamma r_i]$.}

                \STATE{Repeat steps $3-6$}
            \ENDIF
        \ENDFOR
        \RETURN {$\bm g(\bm x(T)) = \displaystyle\sum_{j} \tensor C(j,:)\nu_j e^{\tilde\lambda_j (T-\tau_{n})}$.}
    \end{algorithmic}
\end{algorithm}

In the ASK scheme, the neighborhood for all components must be updated in the adaptivity step.
This is because we set the current state of each component to be the midpoint of the corresponding neighborhood to avoid computing interpolation. 
Also, following the standard practice in the spectral method, we generate Gauss-Lobatto points $\xi_i$ and the associated differentiation matrix $\tensor D_i$ on $[-1, 1]$ first, and then scale them to $[L_i, U_i]$ as $\frac{U_i-L_i}{2} (\xi_i + 1 \big) + L_i$ and $\frac{2 \tensor D_i}{U_i - L_i}$ to improve the computational efficiency.
Moreover, it is worth emphasizing again that the isotropic setup (i.e., using the same number of Gauss-Lobatto points in each direction and fix $r_i\equiv r$) is not necessarily the optimal choice, and that the adaptivity in different directions may improve the efficiency of the algorithm. 
This is beyond the scope of this work and will be included in the future study.
\begin{remark}
The spectral method has been implemented to solving ODEs. 
The existing methods expand solution $\bm x(t)$ with orthogonal polynomials of $t$, which is again a discretization in time.
In this setting, when $\bff$ is nonlinear, one needs to solve a nonlinear system. 
Take a one-dimensional problem for example, the pseudo-spectral approach requires solving $\tensor D\bm y= \bff(\bm y)$, where $\tensor D$ is the differentiation matrix, $\bm y$ consists of the value of $\bm x(t)$ at collocation points (i.e., at different $t$), and $\bff(\bm y)$ is a vector of evaluating $\bff$ at $\bm y$. 
Therefore, the accuracy and efficiency rely on the property of $\bff$ as well as the performance of the nonlinear system solver, selection of initial points, etc.
In other words, even if a high order polynomial is used to approximate a smooth solution, the accuracy may be limited by the performance of the nonlinear solver.  
On the other hand, ASK uses discretization in space, and the accuracy and efficiency are influenced by the eigen-solver and the linear solver. These solvers are more mature and stable than nonlinear solvers in general and typically have (much) better guarantee in accuracy and efficiency.
\end{remark}


\section{Numerical Results}
\label{sec:experiments}

In this section, we first present the performance of ASK on six nonlinear ODE systems including $d=1,2,3$ in \Cref{subsec:solve_ode}.
In each example, we investigate the influence of number of Gauss-Lobatto points $N$, number of check points $n$, and the radius $r$ on the accuracy.
The reference solution is generated by Verner's ninth order Runge-Kutta (RK9) method~\cite{verner1978explicit} with sufficiently small time step if a close-form solution is not available.
Next, in \Cref{subsec:complexity_test} we compare the efficiency of ASK with conventional ODE solvers including Euler forward method, fourth order Runge-Kutta (RK4) method, five step Adams-Bashforth (AB5) method, and four step Adams-Moulton (AM4) method, since these are common methods used to solve ODEs. These comparisons include error against number of function calls, where the function refers to $\bff$ in the ODE. Also, we compare the error against running time (i.e., wall time) for different methods when evaluating $\bff$ is costly. 
Finally, \Cref{subsec:reuse_eigen} shows preliminary study on reusing computed eigenpairs and Koopman modes for solving new initial value problems. 
Here, we consider an uncertainty quantification problem with random initial condition for demonstration purpose. The mean and standard deviation of the solution are computed by ASK and RK4 to compare the performance.
Throughout the numerical examples, ASK employs the Chebyshev-Gauss-Lobatto points. Additionally, we also tested Legendre-Gauss-Lobatto points but there was no significant difference.
(All the MATLAB codes can be downloaded at \url{https://github.com/Navarro33/Adaptive-Spectral-Koopman-Method})

\subsection{Solving ODEs with ASK}
\label{subsec:solve_ode}
\subsubsection{Cosine model}
\label{subsec:cosine_model}
The cosine model is a synthetic model invented for our demonstrative purposes. The governing ODE is written as
  \[\frac{\dif x}{\dif t} = - 0.5 \cos^2(x).\]
We set $x(0)=\frac{\pi}{4}$ and $T=20$ in this example.
Despite the nonlinearity, the system has a closed-form solution $x(t) = \arctan(-0.5t + \tan(x_0))$. 
We aim to compute the solution at $T=20$.
The three experiments use the following parameters: 
\begin{enumerate}[\qquad (a)]
  \item test of $N$: $n = 200, r = \frac{\pi}{20}$; 
  \item test of $n$: $N=9, r=\frac{\pi}{20}$; 
  \item test of $r$: $n = 200, N = 9$. 
\end{enumerate}
In all these tests, we set $\gamma=0.2$. 
\Cref{fig:test_Cosine} summarizes these results in plots (a), (b) and (c), respectively.
The first test shows the exponential convergence of ASK with respect to $N$, which is similar to the conclusions in conventional spectral methods.
Test (b) shows that the accuracy does not change monotonically as $n$ varies given the parameter setting in this work. On the other hand, using no more than $100$ check points is sufficient to obtain good accuracy.
The last test illustrates that the accuracy shows a ``V shape'' with respect to
the radius, i.e., $r$ can not be too large or too small.
\begin{figure}[!ht]
  \centering
  \subfloat[Gauss-Lobatto points]{\label{fig:test_Cosine_N}
  \includegraphics[width=0.40\textwidth]{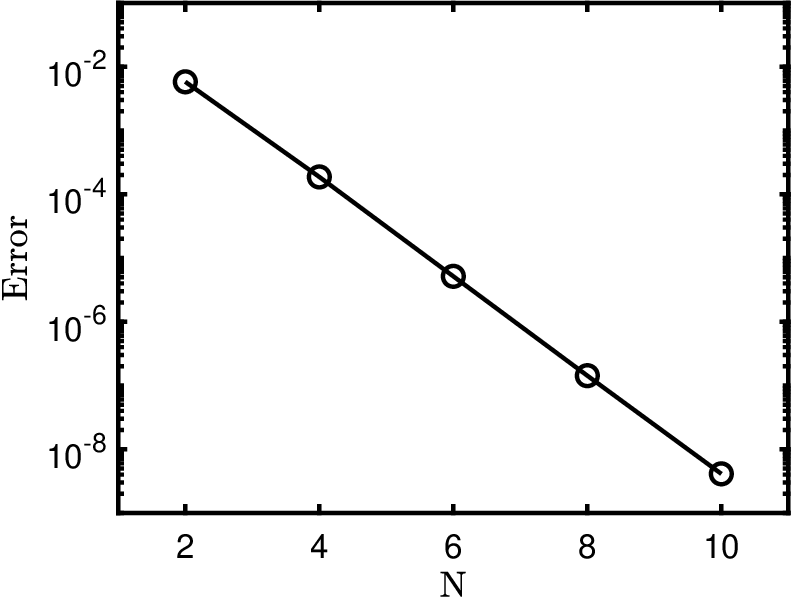}}\qquad
  \subfloat[Check points]{\label{fig:test_Cosine_n}
  \includegraphics[width=0.40\textwidth]{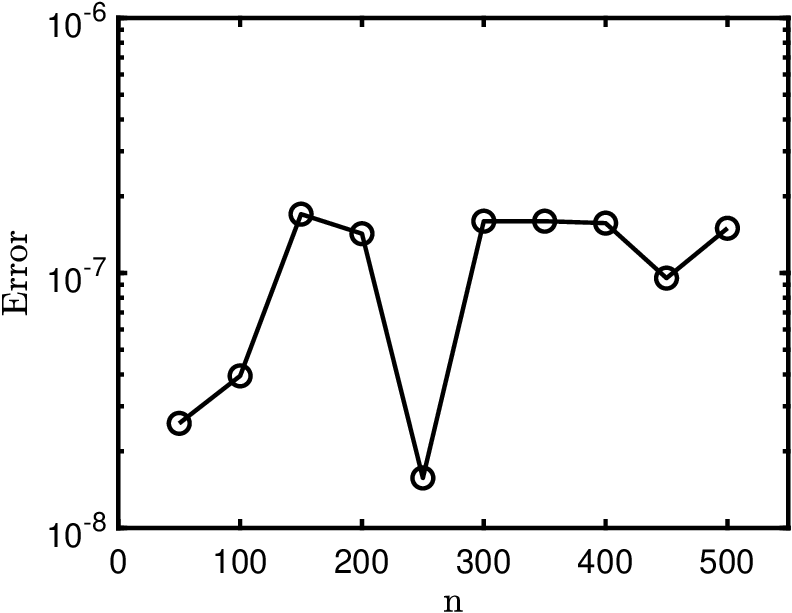}} \\
  \subfloat[Radius]{\label{fig:test_Cosine_r}
  \includegraphics[width=0.40\textwidth]{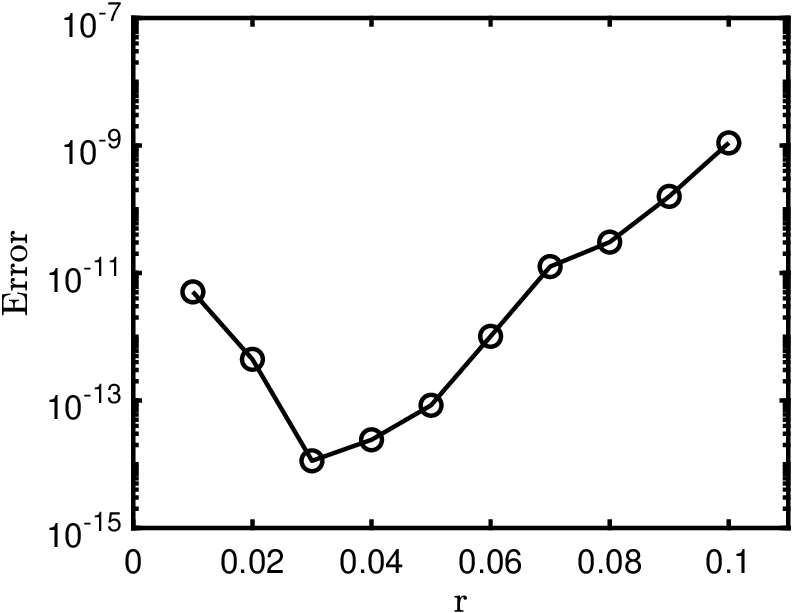}}
  \caption{Cosine model: (a) testing number of Gauss-Lobatto points $N$; (b) testing number of check points $n$; (c) testing radius $r$. } 
  \label{fig:test_Cosine}
\end{figure}


\subsubsection{Lotka-Volterra model}
\label{subsec:LV_model}
The Lotka-Volterra equations model the interactive evolution of the population of prey and predators \cite{bomze1983lotka}.  Specifically, it is defined by
\begin{align*}
    \frac{\dif x_1}{\dif t} = 1.1 x_1 - 0.4 x_1 x_2, \\
    \frac{\dif x_2}{\dif t} = 0.1 x_1 x_2 - 0.4 x_2.
\end{align*}
We set $\bm x(0)=(10, 5)^\top$ and $T=20$ in this example.
The parameters used in three different tests are as follows: 

\begin{enumerate}[\qquad (a)]
  \item test of $N$: $n = 200, r = 1.5$; 
  \item test of $n$: $N = 5, r = 1.5$; 
  \item test of $r$: $n = 200, N = 5$.
\end{enumerate}
In all the tests, $\gamma$ is set to 0.5.
Note that for multi-dimensional systems in the test of radius, all components share the same radius if it is not specified otherwise. \Cref{fig:test_LV} presents the results of these tests.
Similar to the cosine model, the error decreases exponentially with respect to $N$.
The accuracy is quite stable with respect to the number of check points in this case.
Furthermore, \Cref{fig:test_LV_r} shows that the radius cannot not be too small as in the first example. 

\begin{figure}[!ht]
  \centering
  \subfloat[Gauss-Lobatto points]{\label{fig:test_LV_N}
  \includegraphics[width=0.40\textwidth]{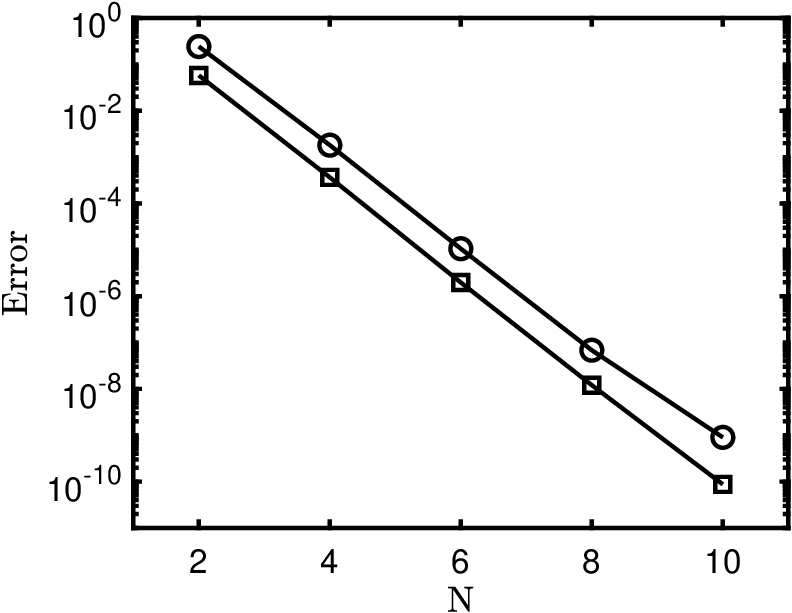}}\qquad
  \subfloat[Check points]{\label{fig:test_LV_n}
  \includegraphics[width=0.40\textwidth]{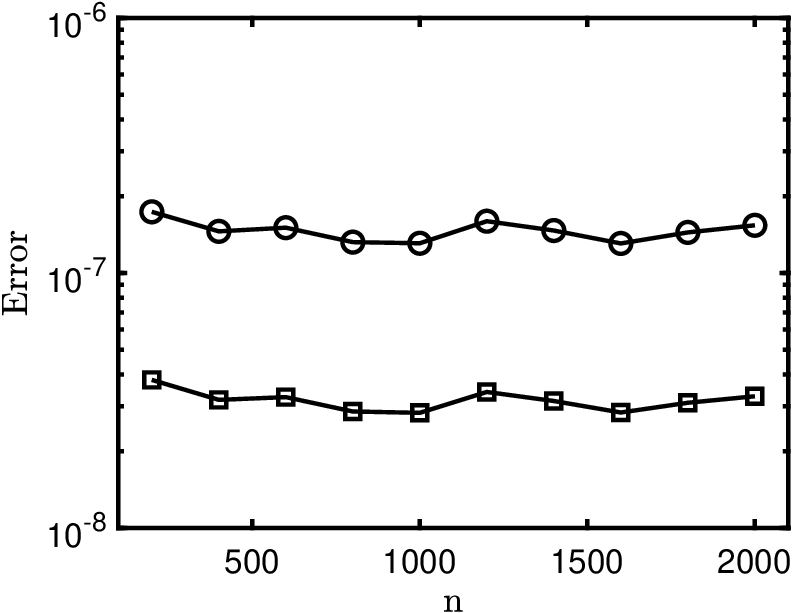}} \\
  \subfloat[Radius]{\label{fig:test_LV_r}
  \includegraphics[width=0.40\textwidth]{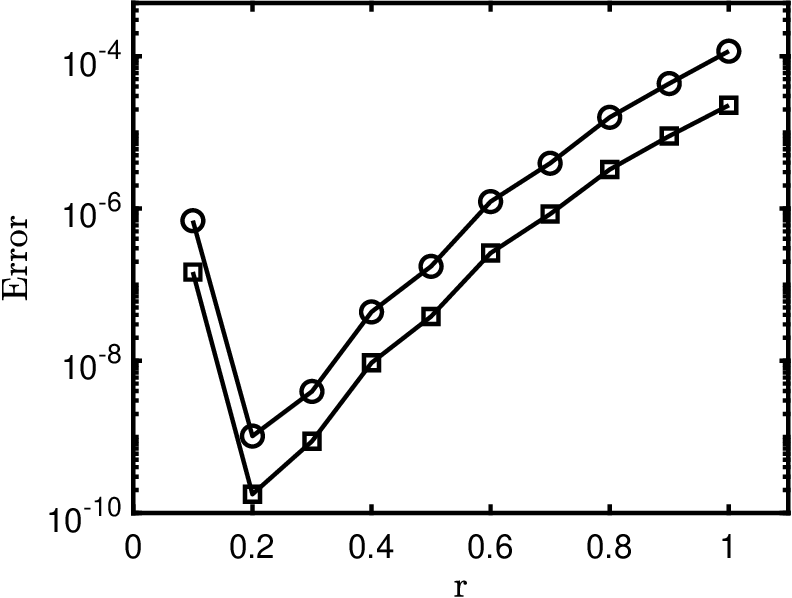}}
  \caption{Lotka-Volterra model: (a) testing number of Gauss-Lobatto points $N$
  (total number of collocation points is $(N+1)^2$); (b) testing number of check points $n$; (c) testing radius $r$. $\medcircle, \medsquare$ denote $x_1, x_2$, respectively.} 
  \label{fig:test_LV}
\end{figure}


\subsubsection{Simple pendulum}
\label{subsec:SP_model}
The simple pendulum is well studied in physics and mechanics. The movement of the pendulum is described by a second order ordinary differential equation,
\begin{align*}
    \frac{\dif^2 \theta}{\dif t^2} = - \frac{g}{L} \sin(\theta).
\end{align*}
Here, $\theta$ is the displacement angle, and $L$ denotes the length of the
pendulum. The parameter $g$ is the gravity acceleration. This second order
equation can be converted to a two-dimensional first-order ODE system. To keep the notations consistent, we define $x_1 := \theta$ and $x_2 := \frac{d \theta}{d t}$. Also, we set $L = g = 9.8$ in our numerical experiments. Correspondingly, 
\begin{align*}
    \frac{\dif x_1}{\dif t} &= x_2, \\
    \frac{\dif x_2}{\dif t} &= - \sin x_1.
\end{align*}
We set $\bm x(0)=(-\frac{\pi}{4},\frac{\pi}{6})^\top$ and $T=20$.
The parameters in the three tests are as follows: 
\begin{enumerate}[\qquad (a)]
 \item test of $N$: $n = 200, r = (\frac{\pi}{8}, \frac{\pi}{12})$;
 \item test of $n$: $N = 7, r = (\frac{\pi}{8}, \frac{\pi}{12})$; 
 \item test of $r$: $n = 200, N = 7$. 
\end{enumerate}
We set $\gamma = 0.2$ in all these tests. The results are presented in~\Cref{fig:test_SP}.
Again, we observe exponential convergence with respect to $N$ in~\Cref{fig:test_SP_N}. 
\Cref{fig:test_SP_n} implies that more check points can improve the accuracy but the difference is not very large.
\Cref{fig:test_SP_r} indicates there exists an ``optimal'' $r$ as in the
simple pendulum example. 
\begin{figure}[!ht]
  \centering
  \subfloat[Gauss-Lobatto points]{\label{fig:test_SP_N}
  \includegraphics[width=0.40\textwidth]{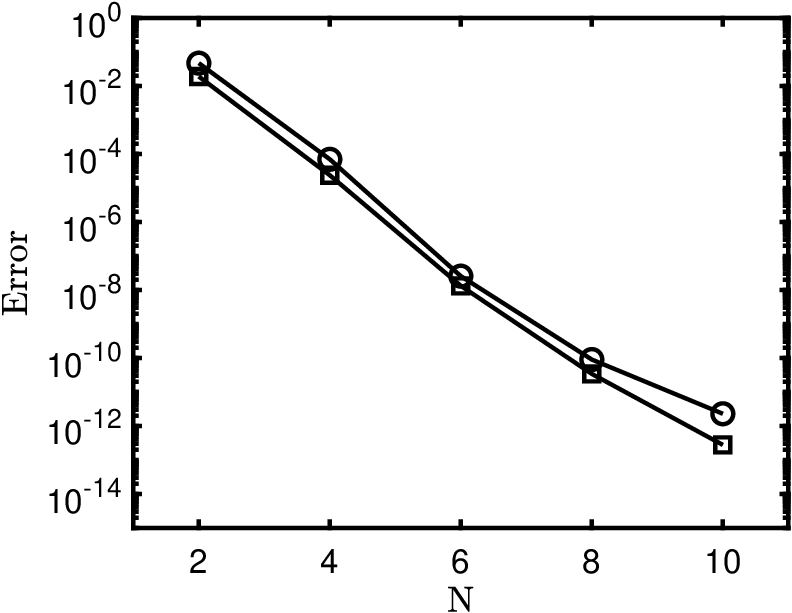}}\qquad
  \subfloat[Check points]{\label{fig:test_SP_n}
  \includegraphics[width=0.40\textwidth]{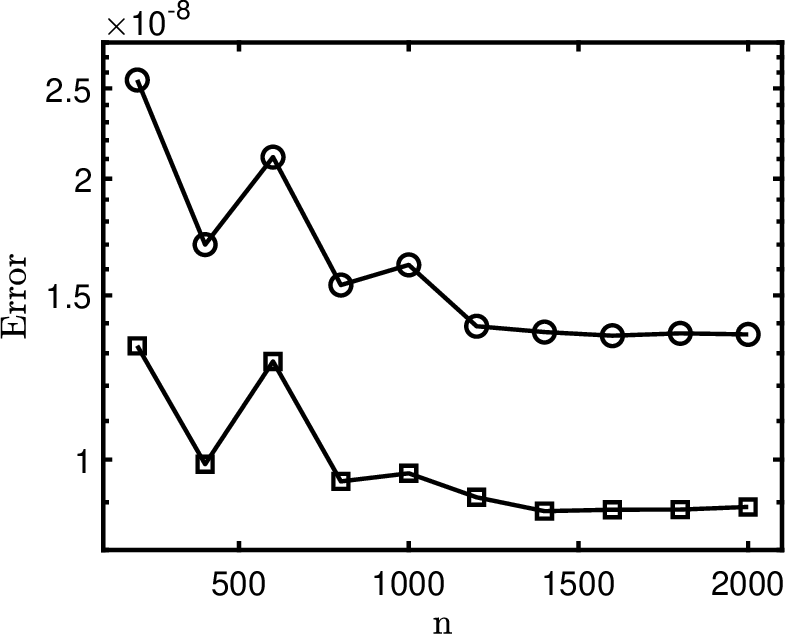}} \\
  \subfloat[Radius]{\label{fig:test_SP_r}
  \includegraphics[width=0.40\textwidth]{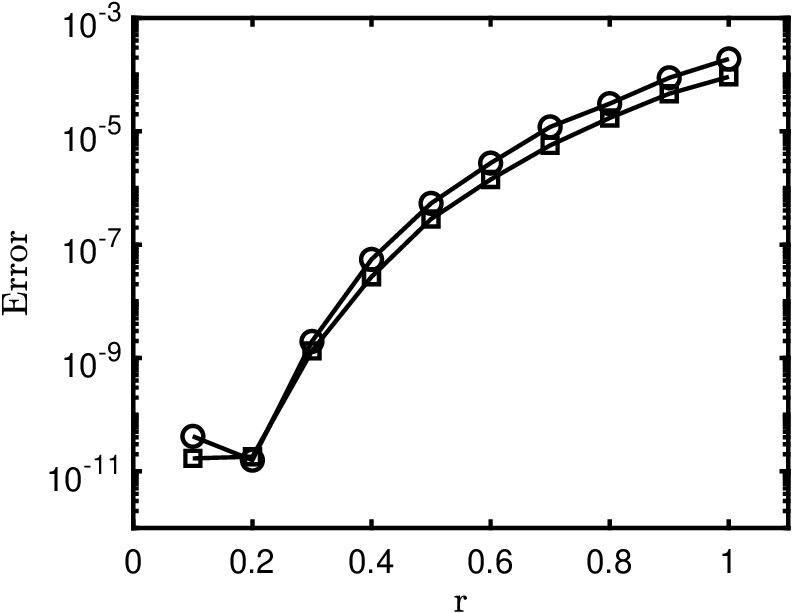}}
  \caption{Simple pendulum: (a) testing number of Gauss-Lobatto points $N$
  (i.e., $(N+1)^2$ collocation points in total); (b) testing number of check points $n$; (c) testing radius $r$. $\medcircle, \medsquare$ denote $x_1, x_2$, respectively.} 
  \label{fig:test_SP}
\end{figure}


\subsubsection{Limit cycle}
\label{subsec:LC_model}
The limit cycle is applied to model oscillatory systems in multiple research fields \cite{vidyasagar2002nonlinear}. Here, we follow the definition,
\begin{align*}
  &	\frac{\dif x_1}{\dif t} = - x_1 - x_2 +  \frac{x_1}{\sqrt{x_1^2 + x_2^2}}, \\
  &	\frac{\dif x_2}{\dif t} =   x_1 - x_2 +  \frac{x_2}{\sqrt{x_1^2 + x_2^2}}. 
\end{align*}
The closed-form solution is 
\begin{align*}
    x_{1}(t) &= \left[1 - \left(1-\sqrt{x_{1}(0)^2 + x_{2}(0)^2}\right) e^{-t} \right]\cos(t + \arctan(x_{2}(0)/x_{1}(0) )), \\
    x_{2}(t) &= \left[1 - \left(1-\sqrt{x_{1}(0)^2 + x_{2}(0)^2}\right) e^{-t} \right] \sin(t + \arctan(x_{2}(0)/x_{1}(0) )).
\end{align*}
We set $\bm x(0)=(\frac{\sqrt{2}}{2}, -\frac{\sqrt{2}}{2})^\top$ and $T=20$ in this example.
The parameters in the experiments are specified as follows: 
\begin{enumerate}[\qquad (a)]
 \item test of $N$: $n = 200, r = \frac{\sqrt{2}}{6}$; 
 \item test of $n$: $N = 7, r = \frac{\sqrt{2}}{6}$;
 \item test of $r$: $n = 200, N = 7$. 
\end{enumerate}
    We set $\gamma=0.2$ in all these tests.
The results shown in~\Cref{fig:test_LC} reveal similar patterns to the results of the simple pendulum, except that 
a very small $r$ can still lead to accurate results.

\begin{figure}[!ht]
  \centering
  \subfloat[Gauss-Lobatto points]{\label{fig:test_LC_N}
  \includegraphics[width=0.40\textwidth]{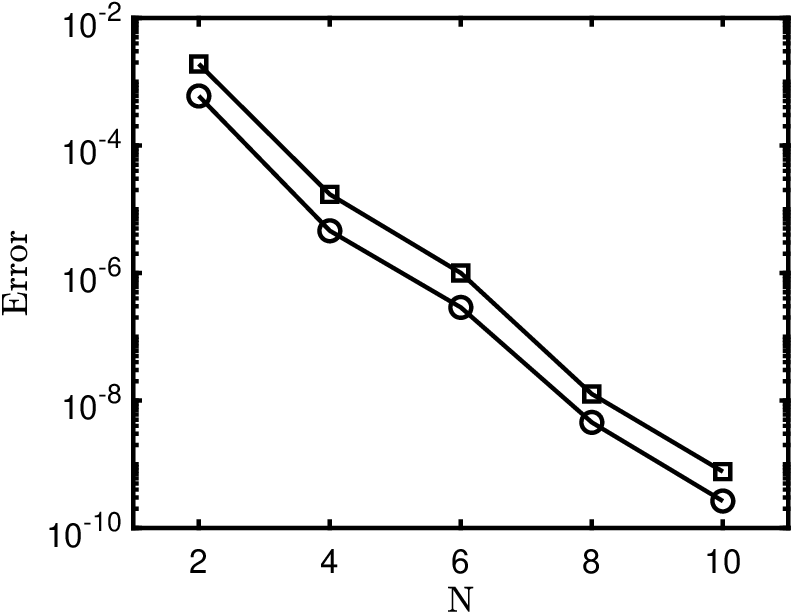}}\qquad
  \subfloat[Check points]{\label{fig:test_LC_n}
  \includegraphics[width=0.40\textwidth]{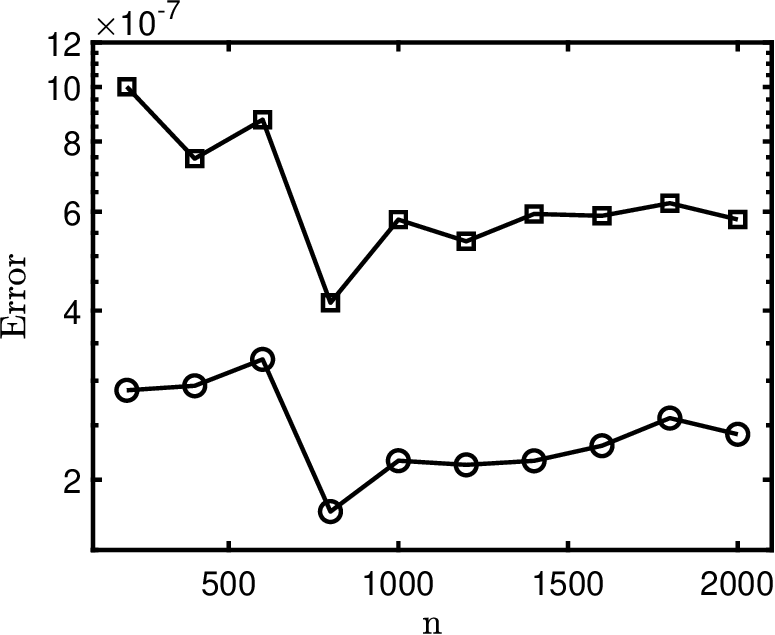}} \\
  \subfloat[Radius]{\label{fig:test_LC_r}
  \includegraphics[width=0.40\textwidth]{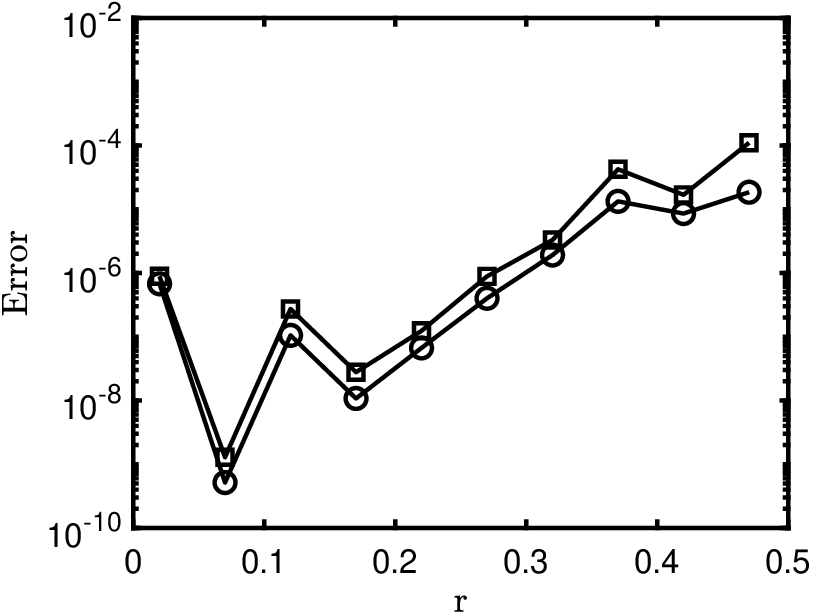}}
  \caption{Limit cycle: (a) testing number of Gauss-Lobatto points $N$ (i.e., $(N+1)^2$ collocation points in total); (b) testing number of checck points $n$; (c) testing radius $r$.  $\medcircle, \medsquare$ denote $x_1, x_2$, respectively.} 
  \label{fig:test_LC}
\end{figure}

For this example, we also compared ASK with RK4 at various time within $[0,T]$.
Given the closed-form solution $x_{C}(t)$, we computed the errors by $\vert x_{ASK}(t) - x_{C}(t) \vert$ and $\vert x_{RK4}(t) - x_{C}(t) \vert$. 
Here, RK4 employed $M = 200$ equidistant time points on $[0,T]$. 
The purpose of this comparison is to demonstrate that the meaning of the check points in ASK is different from the time grids in RK4 (and other conventional ODE solvers).
In this specific case, we set $n=M$. As for ASK, we used $N = 9, r = \frac{\sqrt{2}}{8}, \gamma = 0.2$ and the check points are set to be the same as the time points in RK4. 
With this set of parameters, ASK constantly outperforms RK4 significantly, as illustrated in \Cref{fig:test_full_LC_error}. 
For both components $x_1$ and $x_2$, the errors of ASK remain almost constant at the level of $10^{-10}$. 
In comparison, the error of RK4 exhibits a periodic pattern, rising slowly from $10^{-6}$ to $10^{-5}$. 
Moreover, \Cref{figt:test_full_LC_evolution} illustrates the evolution of the limit cycle model along time. The path decided by the two components elevates spirally as time evolves. If seen from above, the cross section is an exact circle.

\begin{figure}[!ht]
  \centering
  \subfloat[Error of $x_1$]{\label{fig:test_full_LC_error1}
  \includegraphics[width=0.40\textwidth]{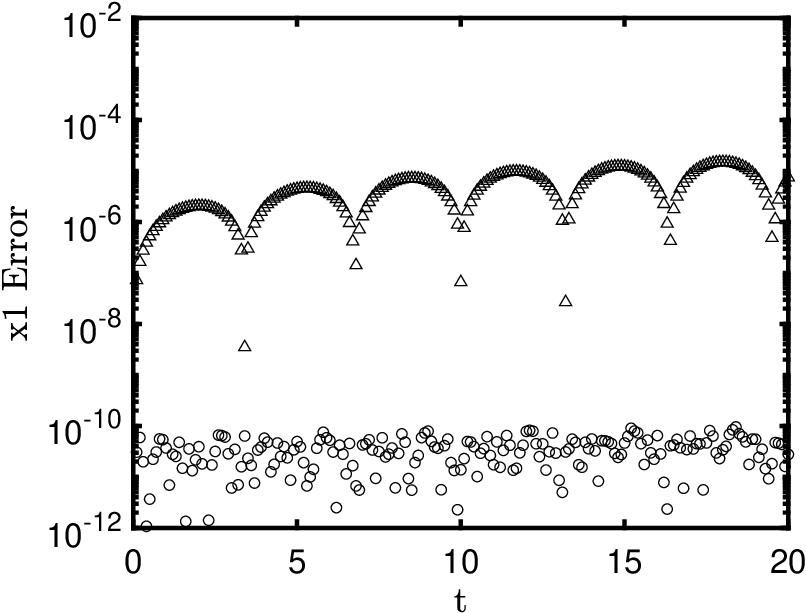}}\qquad
  \subfloat[Error of $x_2$]{\label{fig:test_full_LC_error2}
  \includegraphics[width=0.40\textwidth]{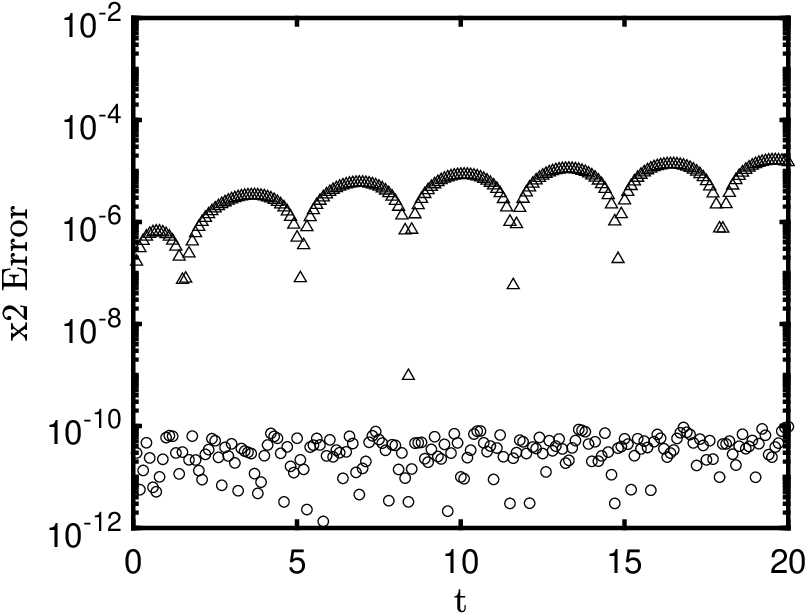}} 
  \caption{Error of the limit cycle solutions on $[0,T]$: $\medcircle$ denotes ASK and $\medtriangleup$ denotes RK4.}
  \label{fig:test_full_LC_error}
\end{figure}

\begin{figure}[!ht]
    \centering
    \includegraphics[width= 0.40\textwidth]{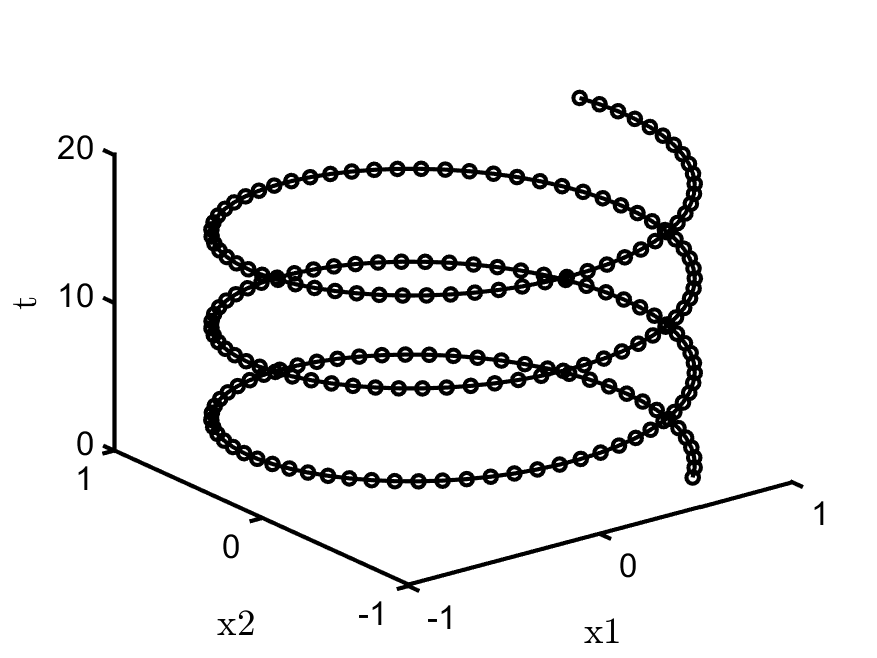}
    \caption{Limit cycle solution trajectory: $\medcircle$ denotes ASK and $-$ denotes the closed-form solutions.}
    \label{figt:test_full_LC_evolution}
\end{figure}


\subsubsection{Kraichnan-Orszag model}
\label{subsec:KO_model}
The Kraichnan-Orszag model comes from the problem raised in \cite{orszag1967dynamical}. This system is nonlinear and three-dimensional, defined by
\begin{align*}
  & \frac{\dif x_1}{\dif t} = x_2 x_3, \\
  & \frac{\dif x_2}{\dif t} = x_1 x_3, \\
  & \frac{\dif x_3}{\dif t} = -2 x_1 x_2.
\end{align*}
We set $\bm x(0) = (1, 2, -3)^\top$ and $T=20$.
In the three experiments, we employed the following parameters:
\begin{enumerate}[\qquad (a)]
\item test of $N$: $n = 400, r = 1$; 
\item test of $n$: $N = 3, r = 0.1$; 
\item test of $r$: $n = 400, N = 3$.
\end{enumerate}
Also, in all the tests, we set $\gamma=0.15$. The results are presented in~\Cref{fig:test_KO_N}. 
In particular, different from previous examples, \Cref{fig:test_KO_n} demonstrates that $n$ significantly influences the accuracy.  
This is because the Kraichnan-Orszag model exhibited strong oscillations, so it requires more frequent update of eigenpairs to guarantee high accuracy.
We can infer that there is an ``optimal'' $r$ in the Kraichnan-Orszag example, as demonstrated by \Cref{fig:test_KO_r}.

\begin{figure}[!ht]
  \centering
  \subfloat[Gauss-Lobatto points]{\label{fig:test_KO_N}
  \includegraphics[width=0.40\textwidth]{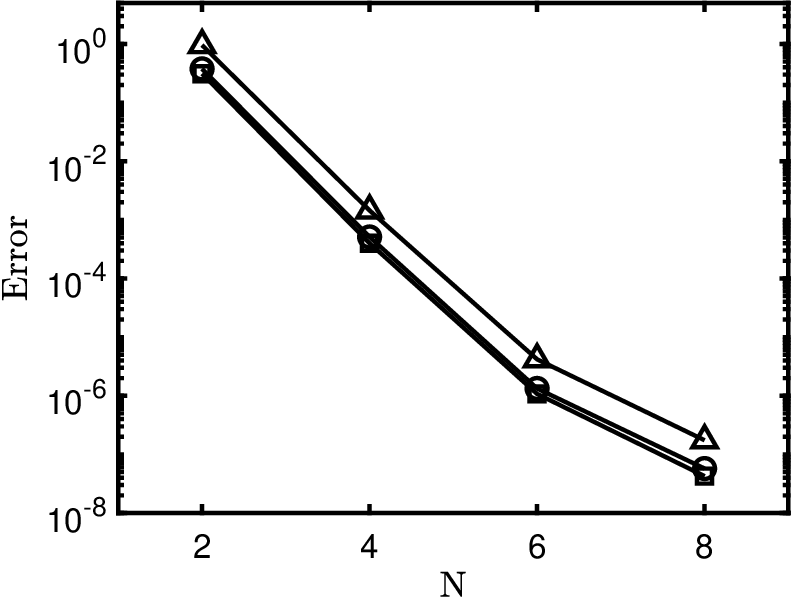}}\qquad
  \subfloat[Check points]{\label{fig:test_KO_n}
  \includegraphics[width=0.40\textwidth]{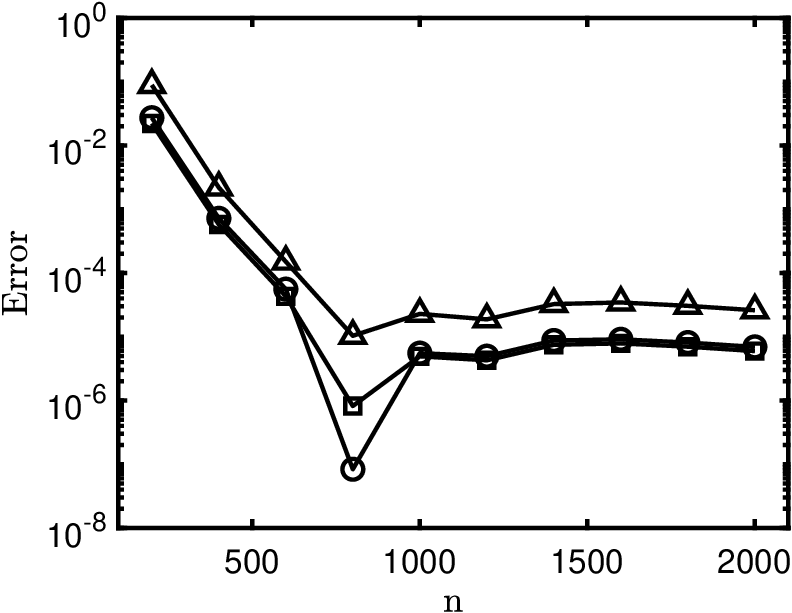}} \\
  \subfloat[Radius]{\label{fig:test_KO_r}
  \includegraphics[width=0.40\textwidth]{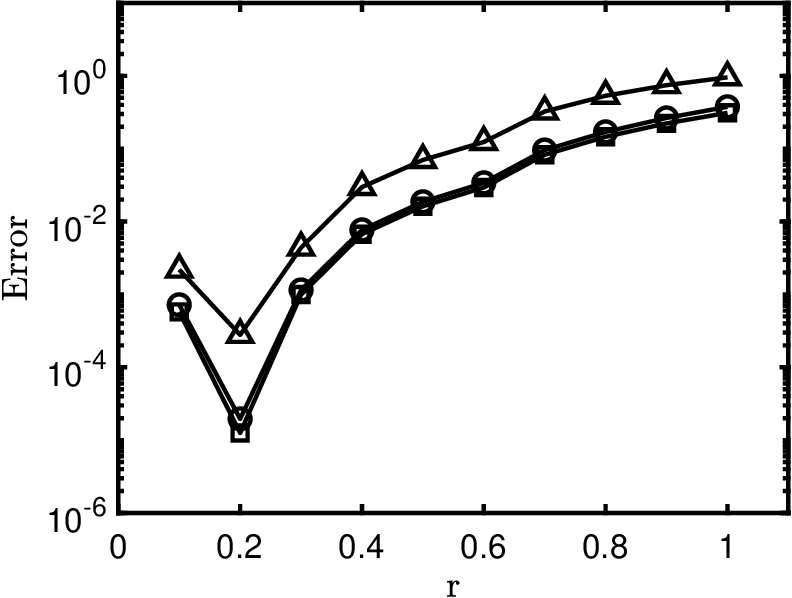}}
  \caption{Kraichnan-Orszag model: (a) testing number of Gauss-Lobatto points $N$ (i.e., $(N+1)^3$ collocation points in total); (b) testing number of check points $n$; (c) testing radius $r$. 
  $\medcircle, \medsquare, \medtriangleup$ denote $x_1, x_2, x_3$, respectively.} 
  \label{fig:test_KO}
\end{figure}


\subsubsection{Lorenz attractor}
\label{subsec:LO_model}
The Lorenz attractor was first introduced by Lorenz \cite{lorenz1963deterministic}. It is a highly chaotic system that models the turbulence in dynamic flows. The governing equations are as follows,
\begin{align*}
  &  \frac{\dif x_1}{\dif t} = 10 (x_2 - x_1), \\
  &  \frac{\dif x_2}{\dif t} = x_1 (28 - x_3) - x_2, \\
  &  \frac{\dif x_3}{\dif t} = x_1 x_2 - 3 x_3.
\end{align*}
We set $\bm x(0) = (5, 5, 5)^\top$ and $T=10$ in this example.
Parameters used in the experiments are listed here:
\begin{enumerate}[\qquad (a)]
 \item test of $N$: $n = 500, r = 4$; 
 \item test of $n$: $N = 5, r = 1$; 
 \item test of $r$: $n = 500, N = 5$.
\end{enumerate}
In all the tests, we set $\gamma=0.5$. The results are summarized in~\Cref{fig:test_LO}. 
As the Loren attractor exhibits chaotic behaviour, it requires a greater number of check points. 
Meanwhile, a relatively large radius favored the convergence of the algorithm.
This is probably because the eigenfunctions need to be approximated in a larger neighborhood of the solution to include sufficient information of the dynamics.

\begin{figure}[!ht]
  \centering
  \subfloat[Gauss-Lobatto points]{\label{fig:test_LO_N}\includegraphics[width=0.40\textwidth]{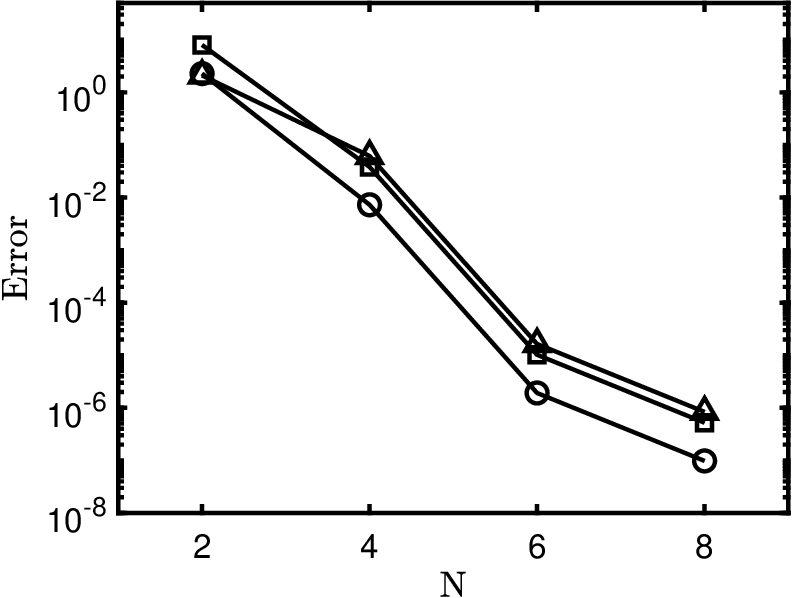}}\qquad
  \subfloat[Check points]{\label{fig:test_LO_n}\includegraphics[width=0.40\textwidth]{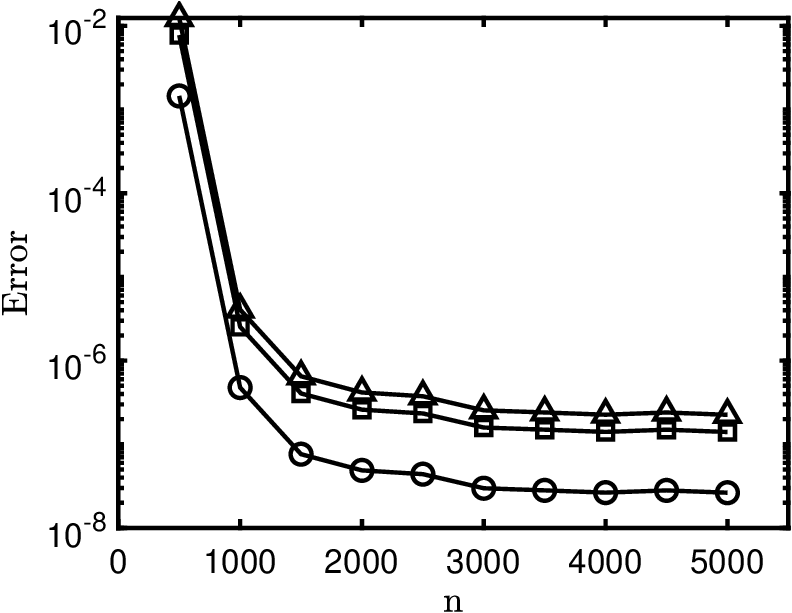}} \\
  \subfloat[Radius]{\label{fig:test_LO_r}\includegraphics[width=0.40\textwidth]{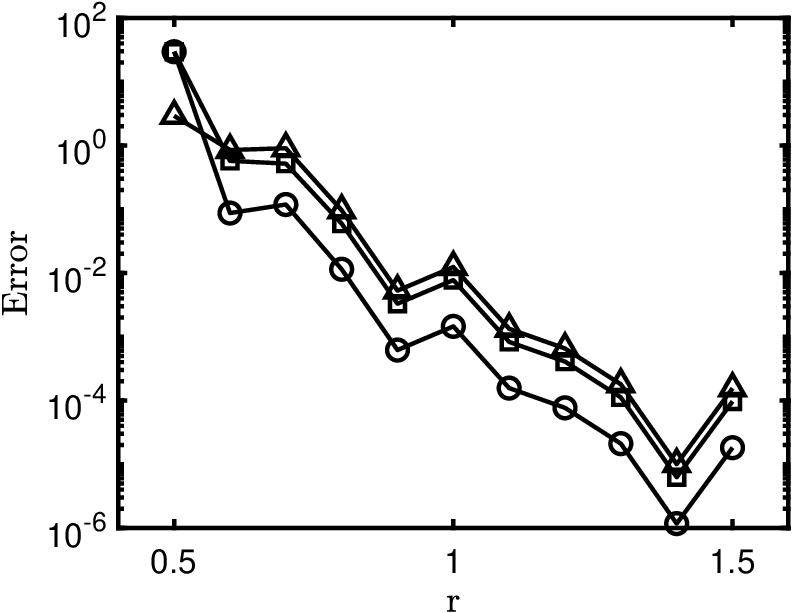}}
  \caption{Loren attractor: (a) testing number of Gauss-Lobatto points $N$ (i.e., $(N+1)^3$ collocation points in total); (b) testing number of check points $n$; (c) testing radius $r$. 
  $\medcircle, \medsquare, \medtriangleup$ denote $x_1, x_2, x_3$, respectively.} 
  \label{fig:test_LO}
\end{figure}

Next we compare the accuracy of ASK and RK4 to demonstrate the difference between the check points and time grids as in the Lorenz attractor example (see \Cref{subsec:LO_model}).
In this test, $T = 20$, and RK4 uses $M=2000$ time steps, i.e., step size $\Delta t=0.01$.
Since the Lorenz attractor does not have closed-form solutions, RK9 is used to compute the reference. 
To guarantee accuracy, RK9 used step size $\Delta t = 0.001$, i.e., $M=20000$ time steps. 
On the other hand, ASK was implemented with $n = 2000, N = 5, r = 1, \gamma = 0.75$.
For the comparison purpose, we set $n=M$ again and use small tolerance for the acceptable range. 
\Cref{fig:test_full_LO_error} reveals the accuracy of ASK in all three components. 
However, unlike the limit cycle case, the error increases as time evolves. 
Although it rises to around $10^{-3}$ at $t = 20$, ASK still yields an acceptable accuracy for such a chaotic system. 
In comparison, RK4's error ascends to a level that makes it impractical. 
To obtain an insight of how the Lorenz system evolves, we plot each of its component in \Cref{fig:test_full_LO_performance}. 
Up to time $t=10$, solutions given by ASK, RK4, and RK9 almost coincide. 
Nevertheless, RK4 deviates from the other two completely starting at $t=11$. 
The evolution vibrates violently and does not possess periodicity, which imposes difficulty on numerical solvers. 

\begin{figure}[!ht]
  \centering
  \subfloat[Error of $x_1$]{\label{fig:test_full_LO_error1}
  \includegraphics[width=0.40\textwidth]{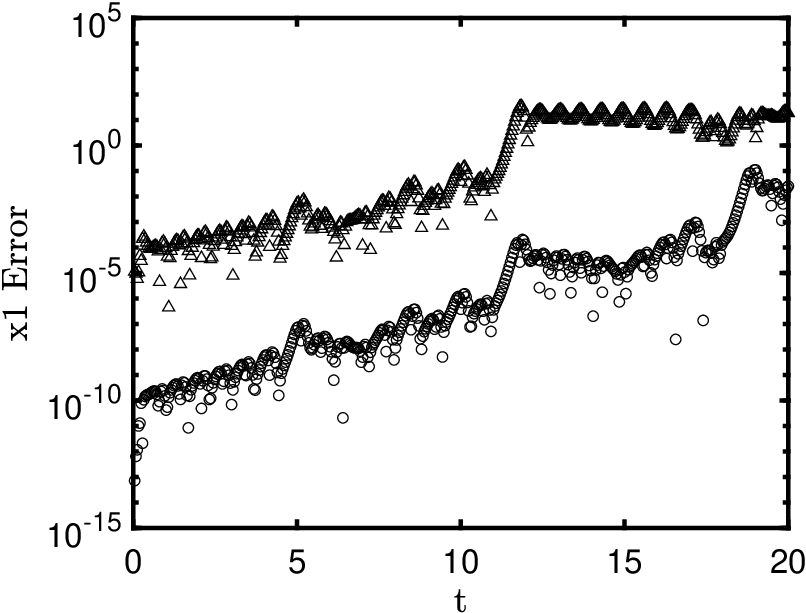}}\qquad
  \subfloat[Error of $x_2$]{\label{fig:test_full_LO_error2}
  \includegraphics[width=0.40\textwidth]{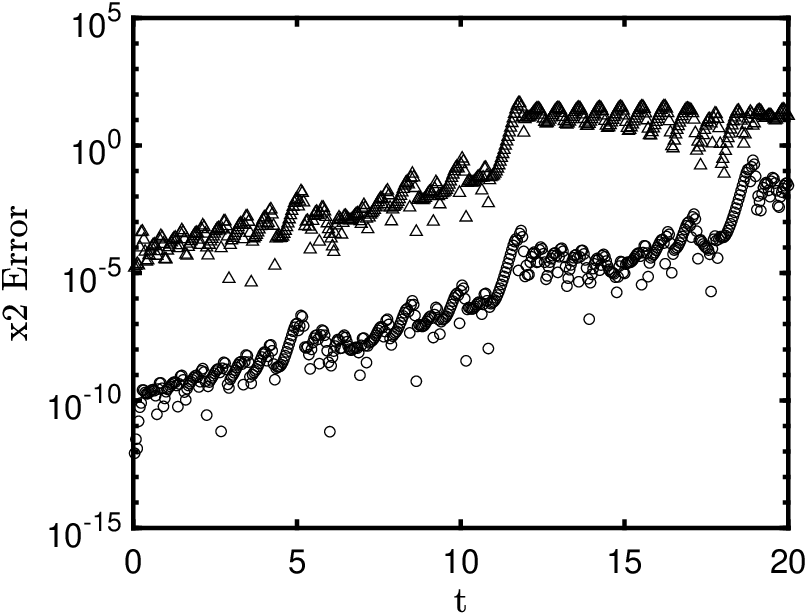}} \\
  \subfloat[Error of $x_3$]{\label{fig:test_full_LO_error3}
  \includegraphics[width=0.40\textwidth]{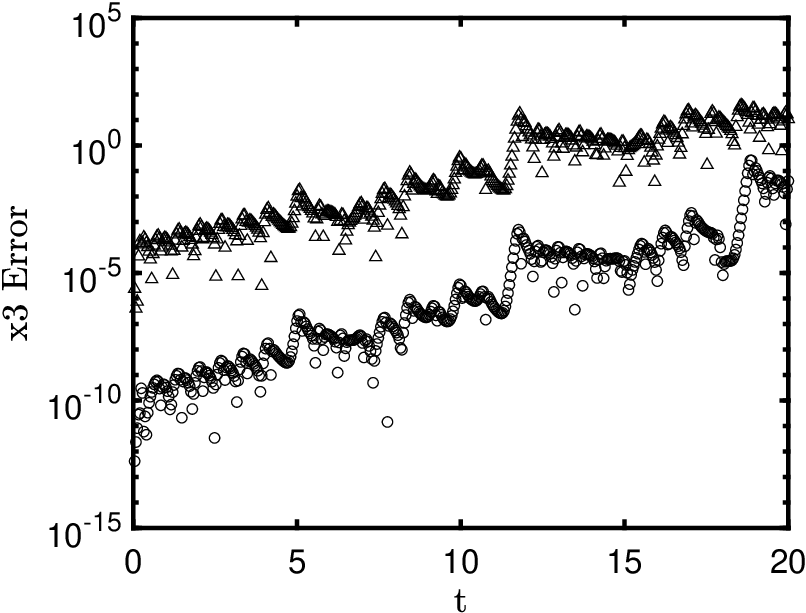}}
  \caption{Error of Lorenz attractor solutions on $[0,T]$: $\medcircle$ denotes ASK and $\medtriangleup$ denotes RK4.}
  \label{fig:test_full_LO_error}
\end{figure}

\begin{figure}[!ht]
  \centering
  \subfloat[$x_1$]{\label{fig:test_full_LO_x1}
  \includegraphics[width=0.40\textwidth]{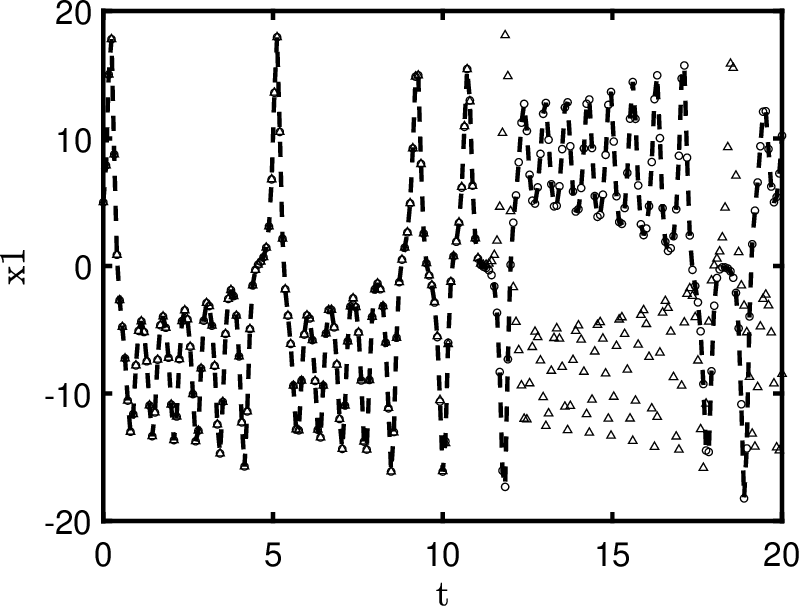}}~ 
  \subfloat[$x_2$]{\label{fig:test_full_LO_x2}
  \includegraphics[width=0.40\textwidth]{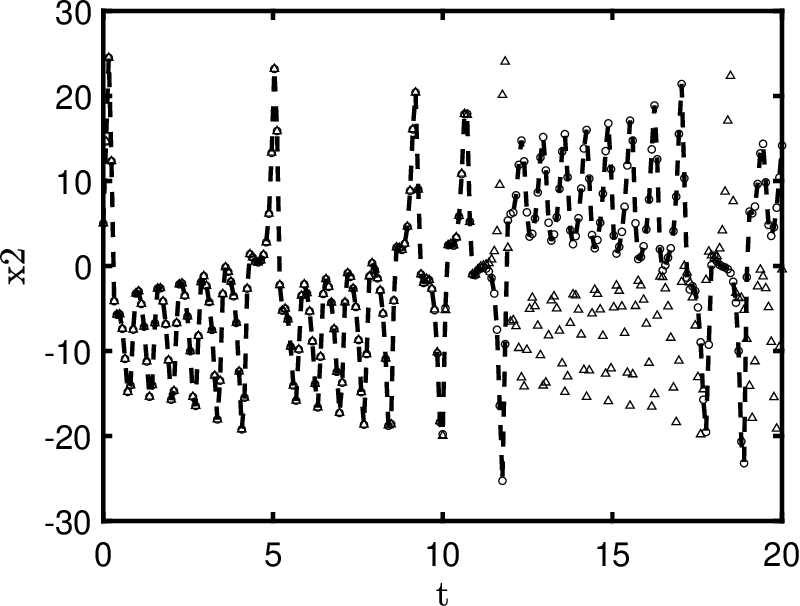}} \\
  \subfloat[$x_3$]{\label{fig:test_full_LO_x3}
  \includegraphics[width=0.40\textwidth]{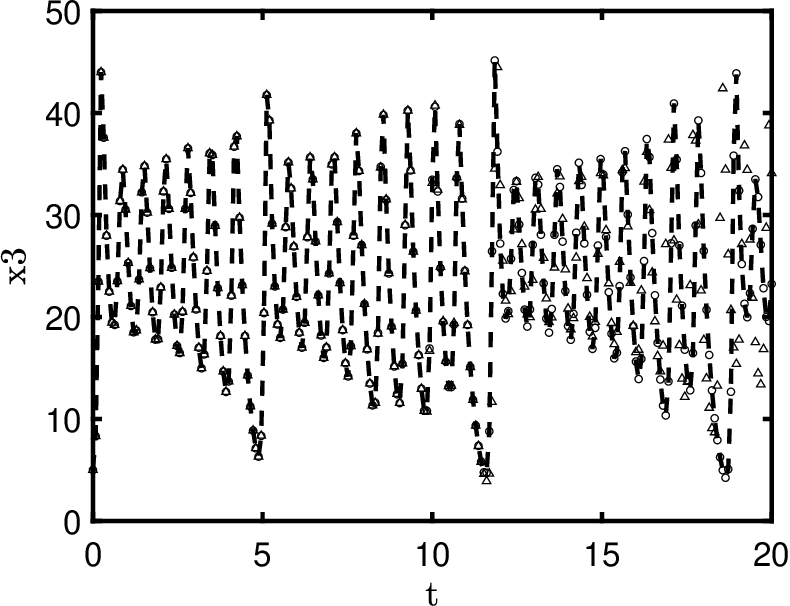}}
  \caption{Lorenz attractor evolution: $\medcircle$ and $\medtriangleup$ denote ASK and RK4, respectively; $--$ denotes the reference solutions given by RK9}
  \label{fig:test_full_LO_performance}
\end{figure}

The chaos can also be observed in a three-dimensional graph depicting the trajectory, using the numerical solutions given by ASK. As in \Cref{figt:test_full_LO_relation}, the lemniscate shape demonstrates the complexity of the system. 
\begin{figure}[!ht]
    \centering
    \includegraphics[width= 0.40\textwidth]{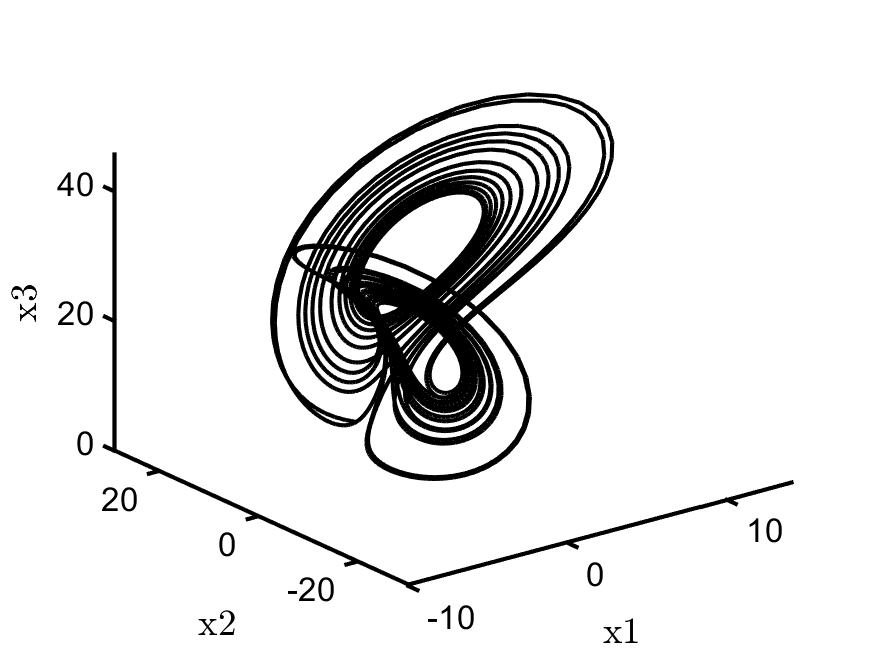}
    \caption{Lorenz attractor 3D visualization}
    \label{figt:test_full_LO_relation}
\end{figure}


\subsection{Computational complexity} 
\label{subsec:complexity_test}
By construction, the computational complexity of conventional explicit scheme solving ODE is $\mathcal{O}(M)$ where $M$ is the number of time steps. 
In other words, it is $M$ multiplied by a constant that represents the cost of evaluating function $\bff$ plus the cost of operations in each time step, which varies according to the accuracy of the scheme.
The computational complexity of ASK depends on the number of times that eigenfunctions are constructed (and corresponding eigenvalues as well as Koopman modes are computed). 
In this construction procedure, ASK needs to perform the eigendecomposition and solve a linear system.
For $d=1$ this is not costly because empirically we set $4\leq N\leq 10$, and the size of matrix in the eigendecomposition as well as the linear system is $N\times N$.
But when $d>1$, the complexity will increase exponentially with the dimension of the current setting because we use the tensor product rule to construct the collocation points and the matrix size is $(N+1)^d\times (N+1)^d$. 
Hence, ASK can be less efficient than conventional ODE solvers for high-dimensional systems. 

As an example, we present the accuracy and running time of different methods solving the simple pendulum problem (see~\Cref{subsec:SP_model}) in Table~\ref{tab:SP_model}.
Here, the final time $T = 20$, and we set $n = 200, N = 7, \bm x_0 = (-
\frac{\pi}{4}, \frac{\pi}{6}), r = (\frac{\pi}{8}, \frac{\pi}{12}), \gamma = 0.2$ in ASK.
For conventional ODE solvers, we set number of time steps as $m = 1000$. 
It is clear that explicit schemes RK4 and AB5 dramatically outperforms ASK in terms of computational time at the same accuracy level. 
Euler forward scheme is fast but not accuracy because it is a first-order scheme. AM4 is an implicit scheme that requires solving nonlinear systems in each step. 
Hence, it is $10$ times slower than ASK and is much slower than explicit schemes. 
But it has higher accuracy in this case.
\begin{table}[!ht]
  \centering
  \caption{Error and running time of solving the simple pendulum problem with $T=20$. Here, $n = 200, N = 7, \bm x_0 = (- \frac{\pi}{4}, \frac{\pi}{6}), r = (\frac{\pi}{8}, \frac{\pi}{12}), \gamma = 0.2$ in ASK and $m=1000$ (i.e., $\Delta t = 0.02$) for other ODE solvers.}
    \begin{tabular}{c|ccc}
        \hline\hline
        Algorithms & $x_1$    & $x_2$  & time (s) \\
        \midrule
        ASK   & 2.5524e-08 & 1.3242e-08 & 0.0498  \\
        Euler & 2.9415e-01 & 1.4698e-01 & 0.0008 \\
        RK4   & 9.3583e-09 & 1.3346e-08 & 0.0017  \\
        AB5   & 2.9335e-08 & 1.1873e-08 & 0.0019 \\
        AM4   & 1.6637e-09 & 6.4782e-10 & 0.5835 \\
        \hline\hline
    \end{tabular}%
    \label{tab:SP_model}
\end{table}%
Similarly, the comparison of different methods for Kraichnan-Orszag is presented in Table~\ref{tab:KO_model}, where $T = 20, n = 300, N = 5, \bm x_0 = (1, 2, -3), r = (0.2, 0.2, 0.2), \gamma = 0.15$ for ASK, and number of time steps $m = 3000$ for other ODE solvers.
In this test, ASK has the best accuracy but it is much slower than the explicit schemes. 
The gap between the computational time is larger than that in the simple pendulum problem.
Also, the computation time of ASK is only slightly shorter than that of AM4.
This is because the Kraichnan-Orszag problem is three-dimensional, and, as expected, ASK becomes less efficient.
\begin{table}[!ht]
  \centering
  \caption{Error and running time of the Kraichnan-Orszag model with $T=20$. Here, n = $300, N = 5, \bm x_0 = (1, 2, -3), r = (0.2, 0.2, 0.2), \gamma = 0.15$ for ASK and $m=3000$ (i.e., $\Delta t = 1/150$) for other ODE solvers. }
  \begin{tabular}{c|cccc}
    \hline\hline
    Algorithms & $x_1$    & $x_2$    & $x_3$ & time (s)\\
    \midrule
    ASK   & 3.0384e-08 & 2.3718e-08 & 8.4070e-08 & 2.1840 \\
    Euler & 8.8547e-01 & 3.2547e-01 & 4.7061e+00 & 0.0082  \\
    RK4   & 2.1203e-07 & 1.6047e-07 & 6.7413e-07 & 0.0113 \\
    AB5   & 8.3518e-06 & 6.6129e-06 & 2.5169e-05 & 0.0231 \\
    AM4   & 4.8154e-07 & 3.8129e-07 & 1.4022e-06 & 2.5995 \\
    \hline\hline
    \end{tabular}%
    \label{tab:KO_model}
\end{table}%

However, in the comparisons above, the cost of evaluating $\bff$ in
the dynamical system is extremely low. 
In the next comparison, we consider an evaluation of $\bff$ as a function call,
and compare the accuracy of ASK and the explicit solvers against number of function calls.
The cosine model ($d=1$), the simple pendulum ($d=2$), and the Kraichnan-Orszag problem ($d=3$) exemplify the comparison. The results are provided in~\Cref{fig:test_efficiency}. 
Here, the error in the simple pendulum case was computed by $\sqrt{\frac{e_1^2 + e_2^2}{2}}$, where $e_1, e_2$ are the errors in $x_1, x_2$, respectively. 
Similarly, the error for the Kraichnan-Orszag model is $\sqrt{\frac{e_1^2 + e_2^2 + e_3^2}{3}}$.
In this test, ASK starts with a small $N$ and keeps increasing it by $2$ as in the convergence study in~\Cref{subsec:solve_ode}.
For conventional ODE solvers, we start with a large time step and then keep reducing it by half.
Figure~\ref{fig:test_efficiency}(a) indicates that ASK is superior to all
conventional solvers even RK9 for the cosine model ($d=1$).
For the simple pendulum ($d=2$), RK9 is the most efficient method, while ASK outperforms RK4 and AB5 when number of function calls is beyond $2000$. 
For the Kraichnan-Orszag model ($d=3$), ASK is less efficient than high-order explicit schemes and can only outperform the Euler forward method. 
These phenomena are consistent with the discussion at the beginning of this subsection. Of note, we do not include conventional implicit solvers in this comparison as they are slower than the explicit solvers with the similar accuracy level for the examples we consider in this work.
\begin{figure}[!ht]
  \centering
  \subfloat[Cosine model]{
  \includegraphics[width=0.32\textwidth]{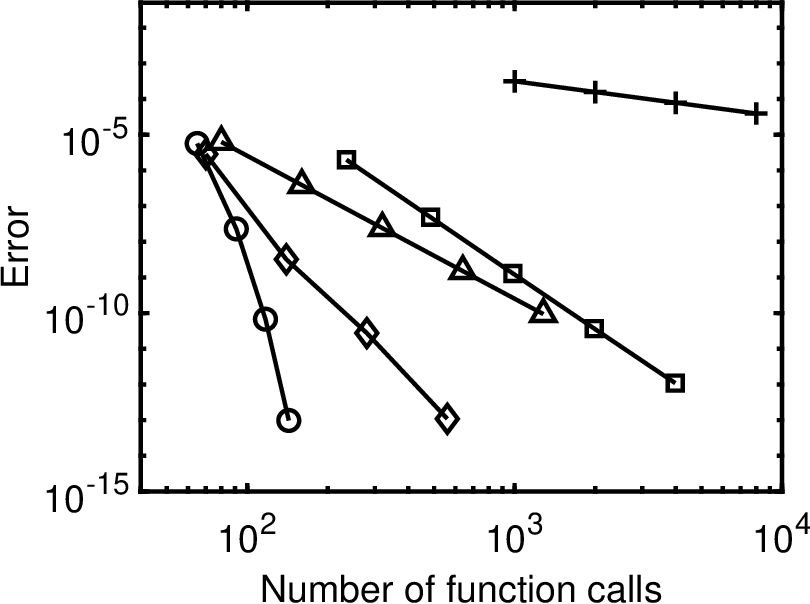}}~
\subfloat[Simple pendulum]{
  \includegraphics[width=0.31\textwidth]{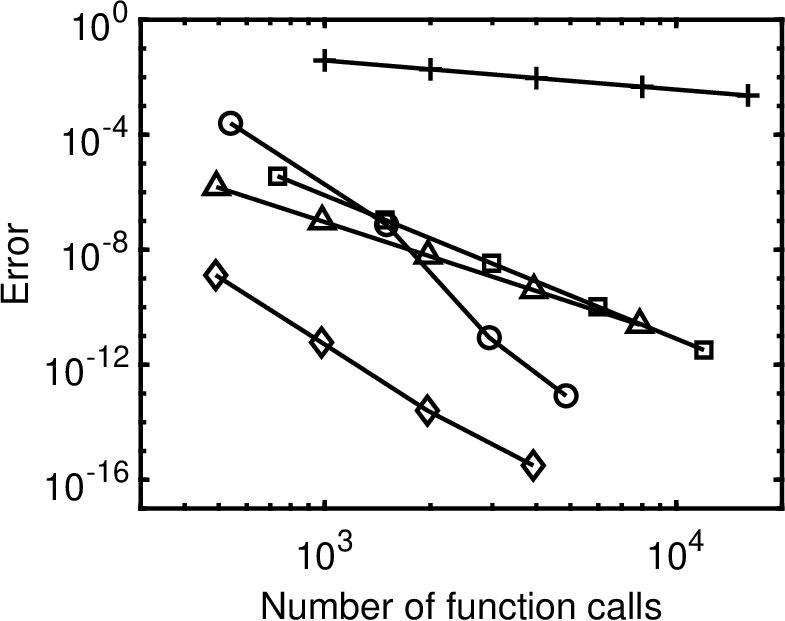}}~
\subfloat[Kraichnan-Orszag model]{
  \includegraphics[width=0.32\textwidth]{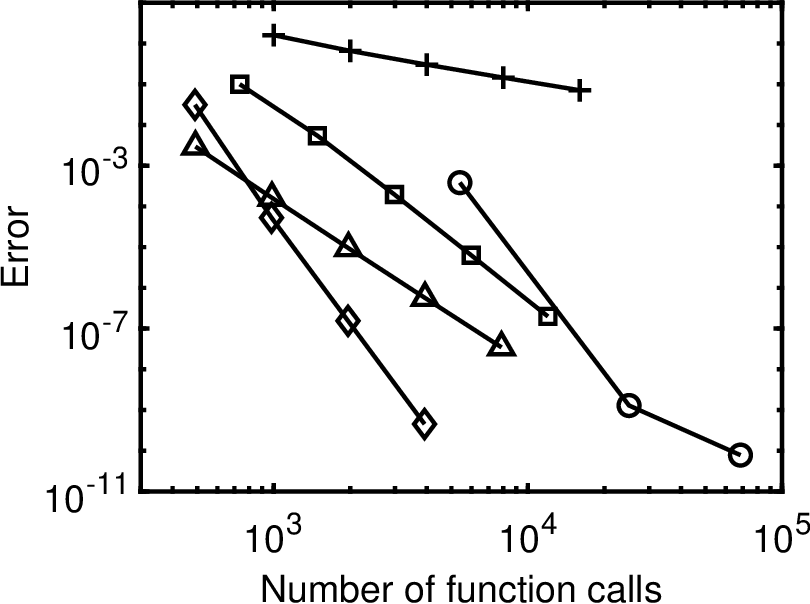}}
  \caption{Comparison of computational efficiency: error against number of function calls. Symbols $\medcircle, +, \medtriangleup, \meddiamond, \medsquare$ denote ASK, Euler, RK4, RK9, and AB5, respectively.}
  \label{fig:test_efficiency}
\end{figure}

We note that the comparison of error against number of function calls still can not fully reflect the efficiency of the algorithms.
It seems to be straightforward that the total computational time of evaluating $\bff$ is the time of evaluating $\bff$ once multiplied by the number of function calls.
However, this is not necessarily true in the modern computing tools.
For example, in MATLAB and Python, built-in vectorization or tensorization approaches are used to accelerate the computing. 
In other words, evaluating $\bff$ at different collocation points $\bm x$ can be vectorized and be achieved with one function call instead of using a for-loop to evaluate $\bff$ at each collocation points one by one. 
Even though the computational time for this vectorized function call is longer than evaluating $\bff$ at one collocation point, it can be much shorter than using a for-loop.
Consequently, ASK is more efficient than conventional ODE solvers when evaluating $\bff$ is costly.
To demonstrate this advantage, we artificially slow down the evaluation of $\bff$ in the above three tests and compare the error against computational time in different methods.
Specifically, for the cosine model and the simple pendulum, we replace sine and cosine functions with their corresponding Taylor expansions up to $x^{1000}$ (i.e., $500$ terms in the expansion); for the Kraichnan-Orszag model, we evaluate $\bff$ $1000$ times in the code before output its value. 
In this way, the computational time for evaluating $\bff$ increases significantly.
We repeat the same tests as in the error against number of function calls study. The results of error against computational time are presented in Figure~\ref{fig:test_efficiency_time}.
It is observed that ASK outperforms all explicit solvers (even RK9) in the selected error and time ranges.
The advantage of ASK over the conventional solvers becomes less significant as the dimension increases, which is consistent with the discussion on the complexity.
Comparing the results in Figure~\ref{fig:test_efficiency_time} and the results in Tables~\ref{tab:SP_model} and~\ref{tab:KO_model}, we can see the impact of cost of evaluating $\bff$ on the efficiency of different approaches as we are solving the same two- and three-dimensional problems.
These comparisons indicate that ASK can be more efficient than conventional ODE solvers when evaluating $\bff$ is costly because of the build-in parallel mechanism for evaluating $\bff$ at multiple $\bm x$.
\begin{figure}[!ht]
  \centering
  \subfloat[Cosine model]{
  \includegraphics[width=0.32\textwidth]{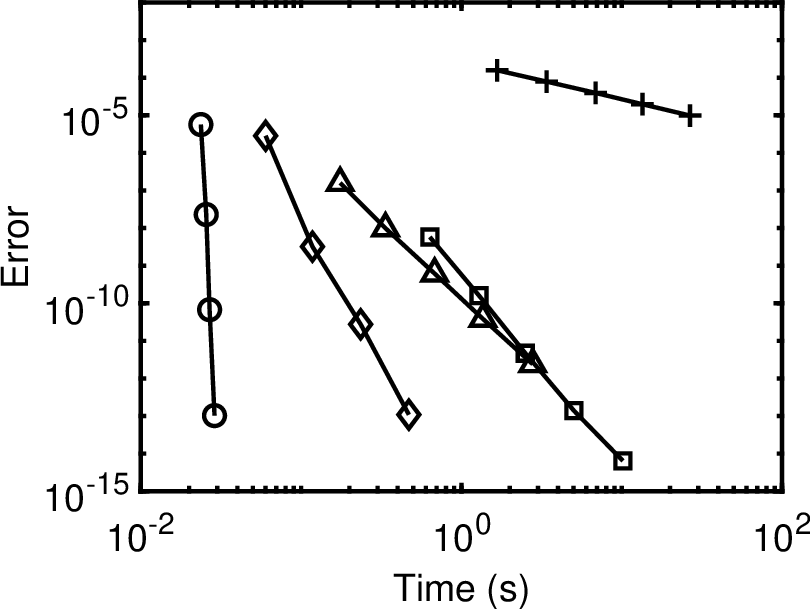}}~
\subfloat[Simple pendulum]{
  \includegraphics[width=0.31\textwidth]{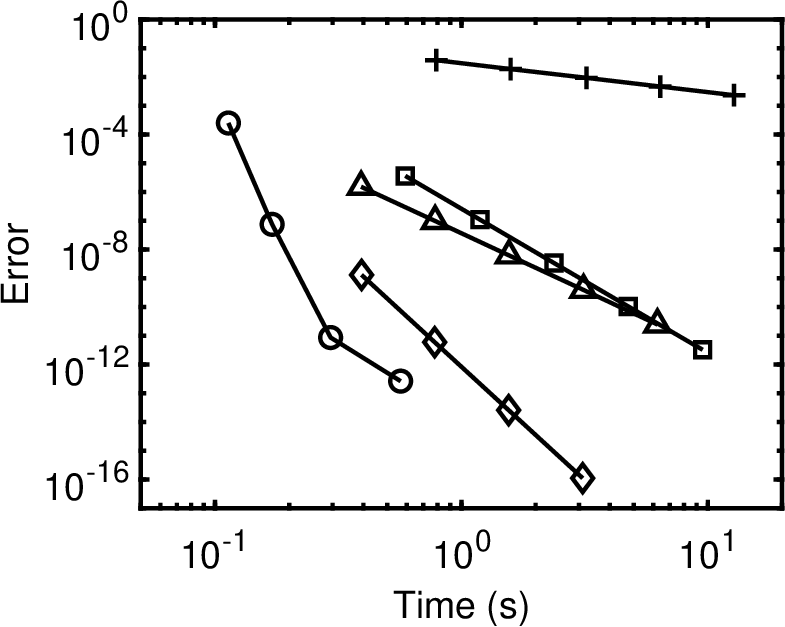}}~
\subfloat[Kraichnan-Orszag model]{
  \includegraphics[width=0.31\textwidth]{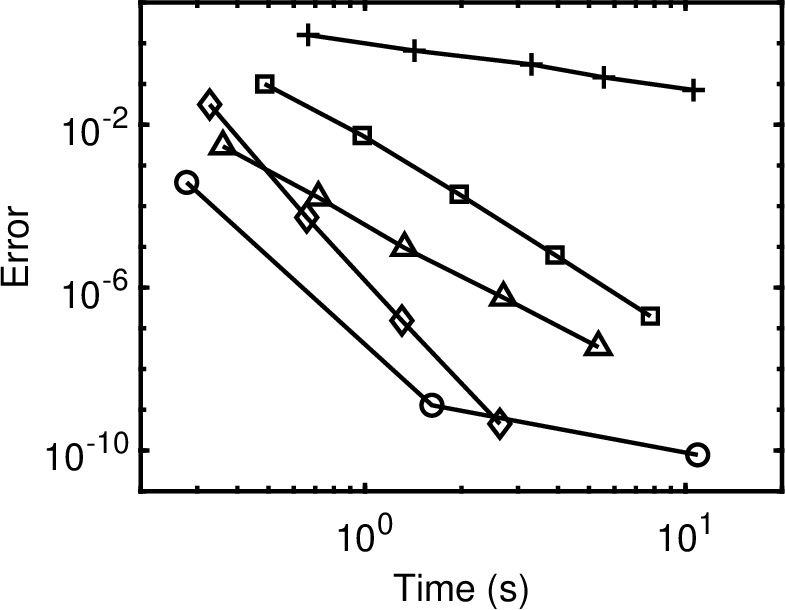}}
  \caption{Comparison of computational efficiency: error against running time.
  Symbols $\medcircle, +, \medtriangleup, \meddiamond, \medsquare$ denote ASK, Euler, RK4, RK9, and AB5, respectively. The time of evaluating $\bff$ is artificially increased in the code.}
  \label{fig:test_efficiency_time}
\end{figure}

\subsection{Reusing eigenpairs and Koopman modes}
\label{subsec:reuse_eigen}
It is typically necessary to solve an ODE with different initial values in the studying the property
of its dynamics numerically, such as sensitivity analysis, perturbation analysis, uncertainty quantification (UQ), etc.
In this case, another advantage of ASK in computation is that it can potentially reuse computed eigenpairs and Koopman modes to facilitate solving the same ODE with different initial values.
Specifically, if~\eqref{eq:trun_expansion} is obtained via ASK based on initial value $\bm x_0$.
Then, for another initial value $\bm x_1$ lying in a sufficiently small neighborhood of $\bm x_0$ (e.g., an open ball centered at $\bm x_0$), it is possible to directly write down the solution as $\sum_{j=0}^N \tilde c_j \vphin_j(\bm x_1) e^{\tilde \lambda_j t}$. 
Here, the only additional computation is evaluating $\vphin_j(\bm x_1)$ for each $j$, which is accomplished by Lagrange interpolation since we computed values of $\vphin$ at the collocation points via eigen-decomposition.
Specifically, the $j$th column of matrix $\bm V$ consists of the values of $\vphin_j$ at the collocation points (see~\Cref{subsec:eigen_decomposition}).
The applicability of this idea relies on the property of the dynamical system and more comprehensive study is needed to decided the radius of the neighborhood for desired accuracy at time $t$. 

Even though a systematic study is beyond the scope of this paper, we present an illustrative example to show the potential of applying ASK to solve an ODE with different initial values efficiently.
Here, ASK solves a dynamical system with random initial values for UQ study. 
Our goal is to compute the mean and the standard deviation of the solution at time $T$. 
The cosine model is used here for demonstration, where the cosine function is replaced with its Taylor expansion as in~\Cref{subsec:complexity_test}. 
Here, Monte Carlo (MC) simulation is leveraged to estimate the mean and standard deviation of the solution at $T=1$, as MC is a state-of-the-art sampling-based UQ method.  
The initial value is set as $\bm x_0 = \frac{\pi}{4}\theta$, where $\theta\sim U[0.75, 1.25]$ is a uniform random variable, and we generate 5,000 samples of $\theta$ denoted as $\theta^{(1)}, \theta^{(2)},\dotsc, \theta^{(5000)}$.
ASK first solves a deterministic ODE with $\bm x_0=\frac{\pi}{4}$ using parameters $N=8, r=0.2$, after which Lagrange interpolation is applied to evaluate $\vphin_j$ ($0\leq j\leq N$) at all the samples of the initial values, i.e., $\frac{\pi}{4}\theta^{(i)}$ ($0\leq i\leq 5000$), to directly construct the solutions.
Then, we use the empirical mean and standard deviation of these 5,000 solutions to estimate the mean and standard deviation of the ODE. Particularly, RK4 serves as a prototypical example of explicit solvers, and set $\Delta t = 0.1$ to solve $5,000$ initial value problems to obtain samples of the solution.
Subsequently, the empirical mean and standard deviation are computed for estimation.
We repeat these tests for $1,000$ sets of independent samples of $\theta$, and present the results in Table~\ref{tab:stat_error}.
It shows that the accuracy of ASK and RK4 is similar for this problem, but the time for solving 5,000 initial values problems (i.e., Average time in the table) indicate that ASK is much more efficient.
Of note, the RK4 implemented here is a vectorized version solving all initial value problems simultaneously, which is much faster than a for-loop of $5,000$ iterations.
The error of RK4 for solving each initial value problem is at the level of $10^{-6}$, which is sufficiently small for estimating statistics in this case because the statistical error is at the level of $10^{-3}$. 
Also, for demonstration purpose, this example does not activate the adaptivity step in ASK, so we only perform eigen-decomposition and solve the linear system once.
A more systematic study and delicate algorithm design will be included in our future work. 
\begin{table}[!ht]
  \centering
  \caption{Relative errors of estimating mean and standard deviation with random initial values using 5,000 Monte Carlo simulations. The relative error is computed by dividing the absolute error by the reference.}
    \begin{tabular}{c|cccc}
    \hline\hline
    Algorithms & Error of mean  & Error of std & Average time (s)\\
    \midrule
    ASK & 6.1403e-03 $\pm$ 4.5974e-03 & 7.4006e-03 $\pm$ 5.5312e-03 & 0.0031 \\
    RK4 & 6.1403e-03 $\pm$ 4.5977e-03 & 7.4014e-03 $\pm$ 5.5306e-03 &  1.0555 \\
    \hline\hline 
    \end{tabular}%
    \label{tab:stat_error}
\end{table}%


\section{Conclusion and Discussion}
\label{sec:dicussion_conclusion}
The ASK method uses the spectral-collocation (i.e., pseudo spectral) method in the state space instead of in time to solve nonlinear autonomous dynamical systems. 
It discretizes the generator of Koopman operator and employs the eigendecomposition to obtain approximation of the eigenfunctions and eigenvalues to construct solutions.
Therefore, like the spectral method, ASK is an expansion-based method to solve ODE systems, in which the basis functions in the expansion are approximated eigenfunctions of the Koopman operator.
In each numerical example presented in this work, ASK exhibits \emph{exponential convergence} as the conventional spectral method.
Therefore, it is suitable for the circumstances where high-accuracy solutions are desired and $\bff$ is expensive to evaluate. 
Different from existing ODE solvers that obtain solutions on mesh grids, ASK does not need a time mesh and can evaluate the solution at any time. 
Hence, the resolution of the time mesh which impacts the solutions of conventional ODE solvers like Runge-Kutta methods does not influence ASK. 

In the ASK algorithm, adaptively updating the eigenfunction approximation in the neighborhood of the solution is necessary because it is challenging to obtain very accurate approximation to the eigenfunctions, eigenvalues and Koopman modes using the initial condition only, especially for highly nonlinear systems.
When no information (e.g., range of states, regularity of the eigenfunctions) of the system is available \emph{a priori}, the adaptivity criterion serves as a updating step based on ``\emph{posterior} error estimates''. 
Furthermore, tunable parameters $r$ and $\gamma$ affect the accuracy as they are related to eigenfunction approximation and the adaptivity criterion.
Numerical analysis based on the spectrum theorem as well as the spectral method is required to systematically understand the convergence and the impact of all parameters on the performance of ASK, which will be included in our future work.

Regarding the computational complexity, as indicated in~\Cref{subsec:complexity_test}, ASK is more efficient than conventional ODE solvers when it is costly to evaluate $\bff$. 
This advantage benefits from the vectorization of evaluating function $\bff$ in modern computing tools.
Namely, ASK has the potential to outperform conventional solvers when evaluating costly function $\bff$ can be parallelized.
Nevertheless, ASK's efficiency decreases (compared with conventional solvers like Runge-Kutta) as the system dimension increases since the tensor product rule is applied to construct high-dimensional collocation points.
A possible way of improving the efficiency is to leverage the sparse grid methods to construct collocation points, which has shown its success in solving partial differential equations (PDEs) with the spectral method~\cite{shen2010efficient, shen2012efficient}.
Following this idea, we demonstrate that combined with the sparse grid method ASK can solve linear and nonlinear PDEs accurately and efficiently~\cite{li2022sparse}.
In this work, the sparse-grid-based ASK manages to solve ODEs systems (semi-discrete PDE) with dimension up to $100$. It is shown to outperform RK4 in efficiency.
Also, applying an anisotropic setting, e.g., different number of Gauss-Lobatto points, different radius, different $\gamma$ in each direction, can potentially enhance the computational efficiency.
Moreover, we provide an illustrative example on reusing computed eigenpairs of Koopman operator to solve the same ODE with new initial values. 
The advantage of ASK over conventional solvers demonstrate its potential in numerical study of the systems sensitivity, stability, uncertainty propagation, etc.

Furthermore, there are interesting relations between our work and the recently works on constructing Koopman operator's eigenfunction in an appropriate space such as~\cite{giannakis2019data, das2020koopman}.
ASK approximates eigenfunctions with orthogonal polynomials, whereas the authors use radial basis functions for the approximation in a reproducing kernel Hilbert spacein~\cite{das2020koopman}.
As an analogue, both spectral methods and radial basis methods are active topics, in the study of numerical PDEs. 
Also, in~\cite{giannakis2019data} the author uses orthogonal basis and the spectral Galerkin approach in a data-driven setting to construct eigenfunctions.
As a connection, the pseudo-spectral method can be considered as a Galerkin projection with a special measure.
Both theoretical and numerical development of the ASK method can benefit from these related studies.

Finally, since ASK is based on the Koopman operator, the spectra structure of the operator is critical in designing the algorithm such as setting parameters. 
For instance, as pointed out in~\cite{das2020koopman}, the signal will be spectrally similar to signal generated by a noisy source in the data-driven setting, if there is a non-empty continuous spectrum. Hence, it will be difficult to distinguish the true discrete spectral components. 
Also, the magnitude of the discrete spectral components carried by the signal may rapidly decay with increasing frequency.
For ASK, similar problems may lead to inaccurate approximation of the solution with a linear combination of eigenfunctions (because the continuous spectrum is associated with an integral based on an appropriate measure) or numerical issues when $N$ is large (because the magnitude of eigenvalues may decay rapidly), which requires further investigation.





\appendix
\section{An example of the obervable}
\label{append:obser}

As an example, we consider the following nonlinear dynamical system~\cite{brunton2016koopman,lusch2018deep}:
\begin{align*}
  &	\frac{\dif x_1}{\dif t} = \alpha x_1, \\
  & \frac{\dif x_2}{\dif t} = \beta (x_2 - x_1^2).
\end{align*}
Here, $\alpha$ and $\beta$ are the inherent parameters of the system. 
For such a system, appropriate observables lead to a closed-form solution. 
In particular, let $\bm y := (x_1, x_2, x_1^2)^\top$ be a three-dimensional observable. 
Then, the system can be converted to the following linear system,
\begin{align*}
    \frac{\dif \bm y}{\dif t} = 
    \begin{bmatrix}
		\alpha & 0 & 0 \\
		 0 & \beta & - \beta \\
		 0 &  0    & 2\alpha 
	\end{bmatrix}
	\boldsymbol{y}.
\end{align*}
For simplicity, assume $x_1(0) = x_2(0) = 1$. Then, we have the closed-form solution
\begin{align*}
	\boldsymbol{y} = \begin{bmatrix}
			1 \\
			0 \\
			0
		\end{bmatrix} e^{\alpha t} 
	+ 	\frac{- 2 \alpha}{\beta - 2\alpha}
		\begin{bmatrix}
			0 \\
			1 \\
			0 
		\end{bmatrix} e^{\beta t}
	+ 	\begin{bmatrix}
			0 \\
			\frac{\beta}{\beta - 2\alpha} \\
			1		
		\end{bmatrix} e^{2\alpha t}
	= 
		\begin{bmatrix}
			e^{\alpha t} \\
			\frac{-2 \alpha}{\beta - 2\alpha} e^{\beta t} + \frac{\beta}{\beta - 2\alpha} e^{2\alpha t}\\
			e^{2\alpha t}
		\end{bmatrix}. 
\end{align*}
Equivalently,
\begin{align*}
	x_1 = e^{\alpha t}, \qquad 
	x_2 = \frac{-2 \alpha}{\beta - 2\alpha} e^{\beta t} + \frac{\beta}{\beta - 2\alpha} e^{2\alpha t}.
\end{align*} 

\section{An example pseudocode}
\label{append:code}
We demonstrate a pseudo code (in MATLAB) of solving $\frac{\dif x}{\dif t} = \cos^2
(x)$, which summarizes the steps presented in ~\cref{subsec:finite_approximation}--\cref{subsec:solution}.
The MATLAB code generating Chebyshev-Gauss-Lobatto points and the associated
differentiation matrix can be found in~\cite{trefethen2000spectral}.
\begin{verbatim}
    f = @(x) cos(x).^2;  % Function f
    x0 = pi/4;           % Initial condition
    r = 0.1;             % Radius of the neighborhood (tunable)
    N = 4;               % Number of collocation points (N+1 in total)
    T = 5;               % Final time
    % Generate collocation points and the differentiation matrix 
    % on [x0-r, x0+r]
    [quad_pnt, diff_mat] = cheb(N, x0-r, x0+r);
    % Compute eigenpairs of the Koopman operator 
    K = diag(f(quad_pnt))*diff_mat;
    [eig_vec, eig_val] = eig(K, 'vector');
    % Compute coefficients (Koopman modes)
    coef = eig_vec\quad_pnt;
    % Construct solutions at time T
    sol = real(eig_vec(N/2+1,:).*coef'*exp(eig_val*T));
\end{verbatim}
When the adaptive update in ASK is activated (see~\cref{subsec:adapt}), we only need to repeat this pseudocode (as a subroutine) with an updated initial condition $x_0$ and final time $T$.


\section*{Acknowledgments}
We thank Professor Yue Yu, Hong Qian, and Yeonjoing Shin for fruitful discussions on the spectral method and properties of the Koopman operator.


\bibliographystyle{plain}
\bibliography{references}
\end{document}